  \theoremstyle{plain}
\newtheorem{theorem}{Theorem}[section]
\newtheorem{corollary}[theorem]{Corollary}
\newtheorem{proposition}[theorem]{Proposition}
\newtheorem{example}[theorem]{Example}
\theoremstyle{definition}
\newtheorem{definition}[theorem]{Definition}
\theoremstyle{remark}
\newtheorem{remark}[theorem]{Remark}
\numberwithin{equation}{section}
\newcommand{\D}{\mathcal{L}(A)}
\begin{document}

\title{Cohomology and deformations of Filippov algebroids}

\author{Satyendra Kumar Mishra}
\address{Statistics and Mathematics Unit, Theoretical Statistics and Mathematics Division, Indian Statistical Institute, 8th Mile, Mysore Road, RVCE Post, Bangalore -560059, INDIA.}
\email{satyamsr10@gmail.com}

\author{Goutam Mukherjee}
\address{Statistics and Mathematics Unit, Theoretical Statistics and Mathematics Division, Indian Statistical Institute, 203 Barrackpore Trunk Road
Kolkata 700108, INDIA.}
\email{goutam@isical.ac.in}

\author{Anita Naolekar}
\address{Statistics and Mathematics Unit, Theoretical Statistics and Mathematics Division, Indian Statistical Institute, 8th Mile, Mysore Road, RVCE Post, Bangalore -560059, INDIA.}
\email{anita@isibang.ac.in}
\footnote{Corresponding author: Satyendra Kumar Mishra (email: satyamsr10@gmail.com).\\
AMS Mathematics Subject Classification (2010): $17$A$30,$ $17$B$55,$ $17$B$99$.}

 \subjclass[2000]{}
\keywords{Filippov algebroid; Filippov algebra; deformation; differential graded Lie algebra; deformation cohomology.}
\date{}
%
\begin{abstract}
In this article, we study the deformations of Filippov algebroids. First, we define a differential graded Lie algebra for a Filippov algebroid by introducing the notion of Filippov multiderivations for a vector bundle. We then discuss deformations of a Filippov algebroid in terms of low-dimensional cohomology associated with this differential graded Lie algebra. We define Nijenhuis operators on Filippov algebroids and characterize trivial deformations of Filippov algebroids in terms of these operators. Finally, we define finite order deformations and discuss the problem of extending a given finite order deformation to a deformation of a higher order.
\end{abstract}
\maketitle

\section{Introduction}
 The notion of $n$-ary operations first appeared in the context of cubic matrices introduced by Cayley in the nineteenth century. M. Kapranov, M. Gelfand, A. Zelevinskii, and N.P. Sokolov also considered the notion of cubic matrices in their works \cite{Kapranov, Sokolov}. In the literature, $n$-ary algebraic structures appeared naturally in theoretical and mathematical physics. The discovery of the Nambu mechanics \cite{Nambu}, and the work of S. Okubo on
Yang-Baxter equations \cite{Okubo} propagated a significant development on $n$-ary algebras. 

There are two $n$-ary generalizations of Lie algebras arising from different interpretations of the standard Jacobi 
identity in the $n$-ary case. Both of these generalizations have quite different applications and properties. In \cite{Filippov}, V.T. Filippov first considered $n$-ary operations $[~~,\ldots,~]$ on a vector space $V$, which satisfy the following generalized Jacobi identity
\begin{equation}\label{Jacobi1}
[a_1, \ldots, a_{n-1}, [b_1, \ldots, b_n]]= \sum_{i=1}^n [b_1, \ldots, b_{i-1}, [a_1, \ldots, a_{n-1}, b_i],b_{i+1}, \ldots, b_n]
\end{equation}
for all $a_i, b_j \in V$.
The above identity implies that $[a_1,\ldots, a_{n-1},~]$ is a derivation of the $n$-ary bracket for any $a_1,\ldots, a_{n-1}\in V$. These generalized Lie algebras are
known as Filippov algebras and the above generalized Jacobi identity is called the Fundamental identity or the Filippov identity. 

There is another natural generalization of the Jacobi identity considered by P. Michor and A. Vinogradov. This version provides an alternative approach to $n$-ary Lie algebras (for precise definition see \cite{MiVi}). In \cite{VinVin}, A. Vinogradov and M. Vinogradov introduced a three-parameter family of multiple Lie algebras to unify the above mentioned $n$-ary Lie algebra structures. 

In 1973, Y. Nambu proposed a generalized Hamiltonian  
dynamics in \cite{Nambu}, also known as ``Nambu Mechanics" by replacing the standard Poisson bracket by a ternary 
one as follows 
$$\{f_1,f_2,f_3\}=det\Bigg(\frac{\partial f_i}{\partial x_j}\Bigg)$$
for all $f_1,f_2,f_3\in C^{\infty}(\mathbb{R}^3)$. This ternary bracket satisfies the identity \eqref{Jacobi1}. Surprisingly for a long time, it remained unnoticed that the corresponding $n$-ary generalization of the standard Poisson bracket satisfies the generalized Jacobi identity \eqref{Jacobi1}. In \cite{SaVa}, D. Sahoo and M. C. Valsakumar noticed this fact in 1992. V.T. Filippov also observed it separately as an example of Filippov algebras in \cite{Filippov}. This observation was the starting point of the work of L. Takhtajan in \cite{Takht}. He systematically developed the fundamentals of Nambu mechanics by introducing $n$-ary Poisson manifolds or ``Nambu-Poisson manifolds" ($n$-ary bracket satisfying the identity \eqref{Jacobi1}). In the subsequent years, several articles (e.g. \cite{R1, R2, R3, R4, R5, R6, R7}) extensively studied Nambu mechanics and Nambu-Poisson manifolds.  In particular, J. Grabowski and G. Marmo discussed the relationship between linear Nambu-Poisson structures and  Filippov algebras. In this process, they introduced an $n$-ary generalization of Lie algebroids in \cite{GM} (known as Filippov algebroid of order $n$).

This paper aims to study deformations and their controlling cohomology of Filippov algebroids. The deformation of an algebraic or analytical object is a fundamental tool to study the object. The deformation theory of algebraic structures was first introduced by M. Gerstenhaber in his outstanding work for associative algebras \cite{Ger63}-\cite{Ger74}. A. Nijenhuis and R. W. Richardson, Jr. extended this theory to Lie algebras in \cite{NR66, NR67a, NR67b}. The deformation of algebraic structures is controlled by certain cohomology known as ``deformation cohomology". For  instance, in the case of associative algebras, the deformation cohomology is given by the Hochschild cohomology, and in the Lie algebra case, it is the Chevalley-Eilenberg cohomology (with coefficients in the adjoint representations). Unlike Lie algebras, geometric structures such as Lie algebroids \cite{CM08} do not admit adjoint representations (see \cite{TrMs} for more details). Since Lie algebroids are Filippov algebroids of order $2$, we cannot expect deformation cohomology with coefficients in an adjoint representation to control deformations of Filippov algebroids.


In \cite{CM08}, M. Crainic and I. Moerdijk introduced the notion of multiderivations of a vector bundle to obtain a differential graded Lie algebra (DGLA) associated to a Lie algebroid. They proved that this DGLA controls deformations (in the sense of \cite{CM08}) of a Lie algebroid.
In this article, we obtain suitable deformation cohomology for the more general case of Filippov algebroid. We introduce the notion of Filippov multiderivations for vector bundles and define a graded Lie algebra structure on the space of Filippov multiderivations. To this end, we describe Filippov algebroid structures on a vector bundle in terms of the graded Lie algebra of Filippov multiderivations on the vector bundle. Consequently, we associate a DGLA to a Filippov algebroid. In the particular case of Filippov algebras, this DGLA is the one that is described in \cite{Rotk05}. Moreover, in the particular case of Lie algebroids, this DGLA coincides with the one defined in \cite{CM08}. We introduce the notion of finite order one-parameter deformations of Filippov algebroids and interpret them in terms of the cohomology associated with the DGLA mentioned above. 

In \cite{Nijenhuis1}, I. Dorfman introduced and studied Nijenhuis operators on Lie algebras, which play an essential role in the study of integrability of nonlinear evolution equations. Nijenhuis operators for the case of Lie algebroids and $n$-Lie algebras are studied in \cite{LeftSym, LSBai}. In \cite{LSBai}, the authors studied trivial deformations of Filippov algebras in terms of Nijenhuis operators. Here, we introduce Nijenhuis operators on Filippov algebroids and characterize trivial deformations in terms of Nijenhuis operators. The following is a section-wise description of this paper.

In Section $2$, we recall definitions and examples of Filippov algebra and Filippov algebroid. We consider the base field to be $\mathbb{R}$, the field of real numbers. We also define the notion of $\mathcal{O}$-operator for a Filippov algebroid with representation on a vector bundle. In Section $3$, we define the notion of Filippov multiderivations of a vector bundle. Later on, we associate a differential graded Lie algebra (DGLA) to a Filippov algebroid. We denote by $H^*_{F}$, the associated cohomology to this DGLA of a  Filippov algebroid. In Section $4$, we define  one-parameter deformations of a Filippov algebroid. We characterize trivial deformations of a Filippov algebroid in terms of Nijenhuis operators. In the last section, we study the finite order deformations. We show that there is a bijective correspondence between $H^2_{F}$ and the equivalence classes of infinitesimal deformations of a Filippov algebroid. Finally, we discuss the problem of extending a finite order deformation to a deformation of the next higher order.

\section{Filippov algebroids}
In this section, we recall definitions and examples of Filippov algebras and Filippov algebroids from \cite{Filippov, GM}.

\begin{definition}
A Filippov algebra of order $n$ (or an $n$-Lie algebra) $L=(L, [~, \ldots, ~])$ over $\mathbb{R}$,  is a vector space $L$ together with a $\mathbb{R}$-multilinear map 
$$[~, \ldots,~]: L \times \stackrel{n}{\cdots} \times L\longrightarrow L$$
which is skew symmetric and satisfies the fundamental identity 
$$[a_1, \ldots, a_{n-1}, [b_1, \ldots, b_n]]= \sum_{i=1}^n [b_1, \ldots, b_{i-1}, [a_1, \ldots, a_{n-1}, b_i],b_{i+1}, \ldots, b_n],$$
for all $a_i, b_j \in L$.

\end{definition}

In \cite{GM}, J.\ Grabowski and G.\ Marmo introduced the notion of a Filippov algebroid as a generalization of Lie algebroids.

\begin{definition}
A Filippov algebroid $A=(A, [~, \ldots, ~], a)$ of order $n$ (or an $n$-Lie algebroid) over a smooth manifold $M$ is a smooth vector bundle $A$ over $M$ together with a Filippov algebra structure of order $n$ on the space $\Gamma A$ of smooth sections of $A$ given by the bracket 
$$[~, \ldots, ~]: \Gamma A \times \stackrel{n}{\cdots} \times \Gamma A \rightarrow \Gamma A,$$ 
 and a bundle map  $a: \wedge ^{n-1}A \rightarrow TM$ (the anchor map) satisfying the following conditions:\\
(a) For all $x_1, \ldots, x_{n-1}, y_1, \ldots, y_{n-1} \in \Gamma A,$ 
$$[a(x_1 \wedge \cdots \wedge x_{n-1}), a(y_1 \wedge \cdots \wedge y_{n-1})]= \sum_{i=1}^{n-1} a(y_1\wedge \cdots \wedge[x_1, \ldots, x_{n-1}, y_i]\wedge y_{i+1} \wedge \cdots\wedge y_{n-1}),$$
where the bracket on the left hand side is the usual Lie bracket on vector fields.\\
(b) For all $x_1, \ldots, x_{n-1}, y\in \Gamma A$ and $f \in C^\infty(M)$,\\
$$[x_1, \ldots, x_{n-1}, fy]= f[x_1, \ldots, x_{n-1}, y] + a(x_1\wedge \cdots \wedge x_{n-1})(f)y.$$
\end{definition}

\begin{example}
Any Lie algebroid is a Filippov algebroid of order $2$.
\end{example}

\begin{example}
Any Filippov algebra of order $n$ can be considered as a Filippov algebroid of order $n$ over a point. 
\end{example}

\begin{example}
Let $(L,[~,\ldots,~])$ be a Filippov algebra of order $n$, where the underlying vector space $L$ is of dimension $m$ with a basis $\{x_1,x_2,\ldots,x_m\}$. Let $c^k_{i_1,\ldots,i_n}$'s be structure constants of the Filippov algebra $(L,[~,\ldots,~])$, i.e.,
$$[x_{i_1},x_{i_2},\ldots,x_{i_n}]=\sum^m_{k=1} c^k_{i_1,\ldots,i_n} x_k.$$

Let $f\in C^{\infty}(\mathbb{R}^m)$. Then we can define a Filippov algebroid structure of order $n$ on the tangent bundle $T\mathbb{R}^m$. Here, the $n$-ary bracket is defined by 
$$[\frac{\partial}{\partial x_{i_1}},\frac{\partial}{\partial x_{i_2}},\ldots,\frac{\partial}{\partial x_{i_n}}]=f\sum^k_{i_1,\ldots,i_n}c^k_{i_1,\ldots,i_n}\frac{\partial}{\partial x_{k}}$$
and the anchor map is trivial. 
\end{example}

\begin{example}
The tangent bundle $T\mathbb{R}^m$ has a Filippov algebroid structure of order $n+1$ for $n\leq m$, where the bracket is given by 
$$[\frac{\partial}{\partial x_{i_1}},\frac{\partial}{\partial x_{i_2}},\ldots,\frac{\partial}{\partial x_{i_n}}]=0,$$
and the anchor map is defined by the tensor field $dx_1\wedge \cdots
\wedge dx_n\otimes \frac{\partial}{\partial x_1}$ 
\end{example}

\begin{definition}
Let $(A, [~, \ldots, ~], a)$ be a Filippov algebroid of order $n$ (over $M$). Then a representation of $A$ on a vector bundle $E$ over $M$ is an $\mathbb{R}$-linear map
$$\rho: \wedge^{n-1}\Gamma A\times \Gamma E \rightarrow \Gamma E $$ 
such that the following conditions are satisfied.
\begin{align}\label{Repcon1}
\nonumber
&\rho(x_1,\ldots,x_{n-1},\rho(y_1,\ldots,y_{n-1},\xi))-\rho(y_1,\ldots,y_{n-1},\rho(x_1,\ldots,x_{n-1},\xi))\\
=&\sum_{i=1}^{n-1}\rho(y_1,\ldots,y_{i-1},[x_1,\ldots,x_{n-1},y_i],y_{i+1},\ldots,y_{n-1},\xi);
\end{align}
\begin{align}\label{Repcon2}
\rho(x_1,\ldots,x_{n-2},[y_1,\ldots,y_{n-1},y_n],\xi))=\sum_{i=1}^n\rho(y_1,\ldots,\hat{y_i},\ldots,y_{n},\rho(x_1,\ldots,x_{n-2},y_i,\xi));
\end{align}
For any $1\leq i\leq n-1$ and $f\in C^{\infty}(M)$, 
\begin{align}
\rho(x_1,\ldots,x_{i-1},fx_i,x_{i+1},\ldots,x_{n-1},\xi)=f\rho(x_1,\ldots,x_i,\ldots,x_{n-1},\xi);
\end{align}
\begin{align}
\rho(x_1,\ldots,x_{n-1},f\xi)=f\rho(x_1,\ldots,x_{n-1},\xi)+a(x_1,\ldots,x_{n-1})(f).\xi.
\end{align}
Here, $x_i,y_i\in \Gamma A$ for $1\leq i\leq n-1$ and $\xi\in \Gamma E$. Also, note that conditions \eqref{Repcon1} and \eqref{Repcon2} imply that $\rho$ is a representation of Filippov algebra $(\Gamma A,[~,\ldots,~])$ on the vector space $\Gamma E$ (for details see \cite{LSBai}).

\end{definition}

Let $(A, [~, \ldots, ~], a)$ be a Filippov algebroid (order $n$) with a representation $\rho$ on a vector bundle $E$. Then we get a semidirect product Filippov algebroid $(A\rtimes_{\rho} E,[~,\ldots,~]_{\rho},a_{\rho})$ of order $n$, where 
\begin{enumerate}
\item the bracket $[~,\ldots,~]$ is given by 
$$[x_1+\xi_1,\ldots,x_{n}+\xi_{n}]=[x_1,\ldots,x_n]+\sum_{i=1}^n(-1)^{n-i}\rho(x_1,\ldots,\hat{x_i},\ldots,x_n,\xi_i),$$
\item and the anchor is given by 
 $$a_{\rho}(x_1+\xi_1,\ldots,x_{n-1}+\xi_{n-1})=a(x_1,\ldots,x_{n-1}).$$
\end{enumerate}

\begin{definition}\label{O-operator}
Let $(A, [~, \ldots, ~], a)$ be a Filippov algebroid of order $n$ (over $M$) with a representation $\rho$ on a vector bundle $E$ over $M$, then a bundle map $T:E\rightarrow A$ is called an $\mathcal{O}$-operator if the following identity holds.
$$[T\xi_1,\ldots, T\xi_n]=\sum_{i=1}^n (-1)^{n-i} T\big(\rho(T\xi_1,\ldots,\hat{T\xi_i},\ldots,T\xi_{n},\xi_i) \big)$$
for all $\xi_1,\ldots,\xi_n\in \Gamma E$.

\end{definition}

\begin{remark}
For $n=2$, the Definition \ref{O-operator} coincides with the notion of $\mathcal{O}$-operators on a Lie algebroid (see \cite{LeftSym}). 

In the particular case when $A$ is a vector bundle over a point, the Filippov algebroid is simply an $n$-Lie algebra and Definition \ref{O-operator} coincides with Definition $5$ in \cite{LSBai}.
\end{remark}

\section{Multiderivations on Filippov algebroids of order $n$}
Let $A=(A, [~, \ldots, ~], a)$ be a Filippov algebroid. The following proposition can be derived from Proposition $2$ of \cite{M}.
\begin{proposition}
Let $\mathcal{L}(A) := \Gamma A \wedge \cdots \wedge \Gamma A$ $(n-1$ copies$)$. Then $\D$ is a Leibniz algebra, where the bracket on $\D$ is defined by 
$$\begin{array}{ll}
&[x_1\wedge \cdots \wedge x_{n-1}, y_1\wedge \cdots \wedge y_{n-1}]\\
= &\sum_{i=1}^{n-1} y_1\wedge \cdots \wedge y_{i-1}\wedge [x_1, \ldots, x_{n-1}, y_i]\wedge y_{i+1}\wedge\cdots \wedge y_{n-1},
\end{array}$$ for all $x_i, y_j \in \Gamma A$.
\end{proposition}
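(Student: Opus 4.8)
The plan is to interpret the proposed bracket as the action of an inner derivation, which reduces the Leibniz identity to a homomorphism property. For a decomposable $X=x_1\wedge\cdots\wedge x_{n-1}\in\D$, I write $D_X\colon\Gamma A\to\Gamma A$ for the $\mathbb{R}$-linear operator $D_X(y)=[x_1,\ldots,x_{n-1},y]$; the fundamental identity \eqref{Jacobi1} says exactly that $D_X$ is a derivation of the $n$-ary bracket. Since $D_X$ is $\mathbb{R}$-linear on $\Gamma A$, it extends uniquely to a (degree-zero) derivation $\widehat{D}_X$ of the exterior algebra on $\Gamma A$, and reading off the displayed formula shows that $[X,-]=\widehat{D}_X$ on $\D$. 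Substituting this everywhere, the (left) Leibniz identity $[X,[Y,Z]]=[[X,Y],Z]+[Y,[X,Z]]$ becomes equivalent to the single operator identity
$$[\widehat{D}_X,\widehat{D}_Y]=\widehat{D}_{[X,Y]},$$
where the left-hand bracket denotes the commutator of endomorphisms.

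First I would prove the restricted identity $[D_X,D_Y]=D_{[X,Y]}$ as operators on $\Gamma A$. Evaluating the commutator at $z\in\Gamma A$ gives $[x_1,\ldots,x_{n-1},[y_1,\ldots,y_{n-1},z]]-[y_1,\ldots,y_{n-1},[x_1,\ldots,x_{n-1},z]]$, and I would expand the first term by \eqref{Jacobi1} with $a_i=x_i$ and $(b_1,\ldots,b_n)=(y_1,\ldots,y_{n-1},z)$. This produces $n-1$ terms in which $D_X$ lands on one of the $y_i$, plus the single term $[y_1,\ldots,y_{n-1},[x_1,\ldots,x_{n-1},z]]$ in which $D_X$ lands on $z$; the latter cancels the second summand of the commutator. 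What survives is $\sum_{i=1}^{n-1}[y_1,\ldots,D_X(y_i),\ldots,y_{n-1},z]$, which is precisely $D_{[X,Y]}(z)$ by the defining formula for $[X,Y]$. This computation is the heart of the matter, though it is really just a transcription of the fundamental identity.

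To upgrade this to $[\widehat{D}_X,\widehat{D}_Y]=\widehat{D}_{[X,Y]}$ on all of $\D$, I would use that the commutator of two degree-zero derivations of an algebra is again a derivation, and that a derivation is determined by its restriction to generators. Both $[\widehat{D}_X,\widehat{D}_Y]$ and $\widehat{D}_{[X,Y]}$ are derivations of the exterior algebra on $\Gamma A$ whose restrictions to the generating space $\Gamma A$ are $[D_X,D_Y]$ and $D_{[X,Y]}$; these coincide by the previous step, so the derivations are equal. Bilinearity in $X$ and $Y$ then removes the restriction to decomposable elements. The step I expect to demand the most care — and the one for which the cited Proposition $2$ of \cite{M} is convenient — is well-definedness rather than the identity itself: because the bracket is prescribed on decomposable generators, one must verify that it descends to the exterior product $\D$, i.e. that it is alternating and multilinear in $x_1,\ldots,x_{n-1}$ and in $y_1,\ldots,y_{n-1}$. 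The former holds because the $n$-ary bracket is alternating and multilinear, and the latter because $\widehat{D}_X$, being a derivation, respects the exterior relations.
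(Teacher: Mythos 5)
Your argument is correct, and it is worth noting that the paper does not actually prove this proposition: it only remarks that the statement ``can be derived from Proposition $2$ of \cite{M}'', which is the corresponding fact for $n$-Lie algebras. Your proof supplies exactly the content of that citation, and the underlying mechanism is the same one Makhlouf's result rests on: the fundamental identity says that $D_X=[x_1,\ldots,x_{n-1},-]$ is a derivation of the $n$-ary bracket, and the commutator computation $[D_X,D_Y]=D_{[X,Y]}$ is a direct transcription of that identity. Your packaging of the bracket as the derivation extension $\widehat{D}_X$ of $D_X$ to the exterior power, so that the left Leibniz identity becomes the single operator identity $[\widehat{D}_X,\widehat{D}_Y]=\widehat{D}_{[X,Y]}$ (checked on generators, then extended using that a commutator of degree-zero derivations is again a derivation determined by its values on generators), is a clean and economical way to organize the standard proof, and you are right that the only point demanding care is well-definedness on non-decomposable elements, which you address. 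The one small point you might make explicit is that the same derivation property of $\widehat{D}_X$ also handles the $C^\infty(M)$-bilinearity relations $fy_i\wedge y_j=y_i\wedge fy_j$ in $\D$ (the function terms produced by the anchor in each summand agree), in case $\D$ is read as $\Gamma(\wedge^{n-1}A)$ rather than as an exterior power over $\mathbb{R}$; with that remark the argument is complete.
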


\begin{definition}
Let $A=(A, [~, \ldots, ~], a)$ be a Filippov algebroid. A derivation of the Filippov algebroid $A$ is a linear map $D: \Gamma A \rightarrow \Gamma A$, which is a derivation of the underlying vector bundle $A$ as well as a derivation of the $n$-Lie algebra bracket, i.e.
\begin{enumerate}
\item there exists $\sigma_D \in \chi(M)$ such that 
$$D(fX)= f D(X) + \sigma_D(f) X \quad \mbox{ for all } f\in C^\infty(M),~X \in \Gamma A;$$

\item for all $x_1,\ldots,x_n\in \Gamma A$, we have 
$$D[x_1,\ldots, x_n]=\sum_{i=1}^n[x_1,\ldots, D(x_i),\ldots, x_n].$$
\end{enumerate} 
 \end{definition}

We denote the set of all derivations of the vector bundle $A$ by $\mbox{Der}(A)$. At the same time, the set of all derivations of the Filippov algebroid $(A, [~, \ldots, ~], a)$ is denoted by $\mbox{Der}_{F}(A)$. 

\begin{definition}
Let $A$ be a vector bundle over smooth manifold $M$. A map 
$$D: \otimes^{p-1} \D\otimes \D \wedge \Gamma A \longrightarrow \Gamma A$$ is called a Filippov multiderivation of degree $p$ ($p\geq 1$) on $A$ if there exists a $C^{\infty}(M)$-multilinear map   
$$\sigma_D: \otimes^p \D \rightarrow \chi(M)$$ 
such that the following identity holds.
$$D(X_1, \ldots, X_p, fz)= fD(X_1, \ldots, X_p, z) + \sigma_D(X_1, \ldots, X_p) (f) z$$ 
for $X_1,\ldots,X_p \in \D,~f\in C^{\infty}(M),$ and  $z \in \Gamma A$. The map $\sigma_D$ is said to be the `symbol' of the Filippov multiderivation $D$.  
\end{definition}

We denote the set of all Filippov multiderivations of the vector bundle $A$ of degree $p$ by $\mbox{Der}^p(A)$. By convention, $\mbox{Der}^0(A):= \mbox{Der}(A),$ and $\mbox{Der}^{-1}(A):= \D.$

\subsection{Graded Lie algebra structure on the space of Multiderivations}
Now, we define a bracket on the graded vector space $\{\mbox{Der}^*(A)\}_{*\geq -1}$, which makes it into a graded Lie algebra. Let $Sh (k, q)$ denote the set of $(k,q)$-shuffles in $S_{k+q}$ (the symmetric group on the set $\{1,\ldots,k+q\}$). Then, we define a circle product $D_1 \circ D_2 \in \mbox{Der}^{p+q}(A)$ for $D_1 \in \mbox{Der}^p(A)$ and $D_2 \in \mbox{Der}^q(A)$ as follows:
for any permutation $\sigma \in Sh(k,q)$ and for all $0 \leq k \leq p-1$, we define \\
\begin{align}\label{CPk}
&D_1 \circ_k^\sigma D_2(X_1, \ldots, X_{p+q}, z)\\\nonumber
=&(-1)^{kq}\sum_{s=1}^{n-1} D_1(X_{\sigma(1)}, \ldots, X_{\sigma(k)},X_{k+q+1}^1\wedge \cdots\wedge X_{k+q+1}^{s-1}\wedge D_2(X_{\sigma(k+1)}, \ldots, X_{\sigma(k+q)}, X_{k+q+1}^s) \\\nonumber
&\wedge X_{k+q+1}^{s+1}\wedge \cdots\wedge X_{k+q+1}^{n-1}, X_{k+q+2},\ldots, X_{p+q}, z).\nonumber
\end{align}
Also, for $k=p$ and $\sigma \in Sh(p, q)$, we define\\
\begin{align}\label{CP0}
 D_1 \circ_p^\sigma D_2(X_1, \ldots, X_{p+q}, z)
=  (-1)^{pq} D_1\big(X_{\sigma(1)}, \ldots, X_{\sigma(p)}, D_2(X_{\sigma(p+1)}, \ldots, X_{\sigma(p+q)}, z)\big).
\end{align}

Then, the circle product $D_1 \circ D_2 \in \mbox{Der}^{p+q}(A)$ for $D_1 \in \mbox{Der}^p(A)$ and $D_2 \in \mbox{Der}^q(A)$ is given by the following expression. 
\begin{equation}\label{Circle Product}
D_1\circ D_2 = \sum_{k=0}^p \sum_{\sigma \in Sh(k,q)} \mbox{sgn}(\sigma) D_1 \circ_k^\sigma D_2.
\end{equation}
Let us denote 
\begin{itemize}
\item For all $X_1, \ldots, X_{p+q}\in \D$ and $z\in A$,
\begin{equation*}
D_1 \circ_p D_2( X_1, \ldots, X_{p+q},z):= \sum_{\sigma \in Sh(p,q)}sgn(\sigma) D_1 \circ_p^\sigma D_2(X_1, \ldots, X_{p+q}, z).
\end{equation*}

\item For all $0\leq k\leq p-1$,
\begin{equation*}
D_1 \circ_k D_2( X_1, \ldots, X_{p+q},z)=\sum_{\sigma \in Sh(k,q)}sgn(\sigma) (D_1 \circ_k^{\sigma} D_2)( X_1, \ldots, X_{p+q},z).
\end{equation*}
\end{itemize}
Finally, we define a bracket on the space of multiderivations as follows:
\begin{equation}\label{Bracket}
[D_1, D_2]:= (-1)^{pq} D_1 \circ D_2 - D_2 \circ D_1.
\end{equation}

\begin{proposition}
The bracket as defined by equation \eqref{Bracket} makes $\mbox{Der}^*(A)$ into a graded Lie algebra.
\end{proposition}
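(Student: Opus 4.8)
The plan is to verify, for the grading $|D|=p$ on $\mbox{Der}^p(A)$, the two axioms of a graded Lie algebra: graded skew-symmetry and the graded Jacobi identity. I would settle two preliminaries first. One must check that \eqref{Circle Product} lands in $\mbox{Der}^{p+q}(A)$, i.e. that $D_1\circ D_2$ admits a symbol; feeding $fz$ into the final slot and applying the Leibniz rules of $D_1$ and $D_2$ in their respective last arguments yields the defining identity and exhibits $\sigma_{D_1\circ D_2}$ as a signed shuffle combination of $\sigma_{D_1}$ and $\sigma_{D_2}$. Graded skew-symmetry then costs nothing: comparing \eqref{Bracket} for $[D_1,D_2]$ and $[D_2,D_1]$ term by term gives $[D_1,D_2]=-(-1)^{pq}[D_2,D_1]$.

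The heart of the matter is the graded Jacobi identity, and the efficient route is to prove that the circle product $\circ$ is a graded pre-Lie product and then invoke the standard fact that the graded commutator \eqref{Bracket} of a graded pre-Lie product is a graded Lie bracket. Concretely I would expand the associator $(D_1\circ D_2)\circ D_3-D_1\circ(D_2\circ D_3)$ and show it is graded symmetric under the interchange of $D_2$ and $D_3$, namely
\begin{equation*}
(D_1\circ D_2)\circ D_3-D_1\circ(D_2\circ D_3)=(-1)^{qr}\big[(D_1\circ D_3)\circ D_2-D_1\circ(D_3\circ D_2)\big],
\end{equation*}
where $q=\deg D_2$ and $r=\deg D_3$. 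Following Gerstenhaber's bookkeeping, the terms of $(D_1\circ D_2)\circ D_3$ separate into those in which $D_3$ is inserted into a slot already occupied by $D_2$ and those in which $D_2$ and $D_3$ occupy disjoint slots of $D_1$. The first family reproduces $D_1\circ(D_2\circ D_3)$ once the prefactors $(-1)^{kq}$ of \eqref{CPk}, the $(-1)^{pq}$ of \eqref{CP0}, and the shuffle signs are matched, hence it cancels in the associator; the second family is visibly unchanged, up to the Koszul sign $(-1)^{qr}$, under swapping $D_2$ and $D_3$. This is exactly the displayed relation.

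The main obstacle will be the sign and shuffle bookkeeping, aggravated by two features absent from the associative prototype. First, a multiderivation acts on the wedge powers $\D=\Gamma A\wedge\cdots\wedge\Gamma A$, so the inner sum $\sum_{s=1}^{n-1}$ in \eqref{CPk} spreads $D_2$ over the factors of a single $\D$-slot of $D_1$; in a nested composition $D_3$ may then land on the very same $\D$-slot, producing second-generation terms whose signs must be reconciled with the skew-symmetry of the wedge. Second, passing from $(D_1\circ D_2)\circ D_3$ to the pre-Lie normal form forces one to reindex sums over $Sh(k,q)$ as sums over $Sh(k,q+r)$ and their refinements, and checking that $\mathrm{sgn}(\sigma)$ together with the sign prefactors assembles into the single factor $(-1)^{qr}$ is where all the care lies. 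As a safeguard on the signs I would test the identity in the two cases that are already understood: the Filippov-algebra case over a point, where the bracket must reduce to the graded Lie bracket of \cite{Rotk05}, and the order $n=2$ case, where it must reduce to the Crainic--Moerdijk bracket of \cite{CM08}; a cleaner, if less explicit, alternative would be to exhibit $\mbox{Der}^*(A)$ as a graded Lie subalgebra of a Nijenhuis--Richardson-type algebra of skew multilinear maps, so that both axioms are inherited once closure under the ambient bracket is verified.
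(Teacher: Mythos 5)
Your proposal misplaces the difficulty, and its first ``preliminary'' is in fact false: $D_1\circ D_2$ does \emph{not} land in $\mbox{Der}^{p+q}(A)$, and establishing closure is precisely what the paper's proof is devoted to. Feed $fz$ into the last slot of the $k=p$ terms \eqref{CP0}: since $D_2(X_{\sigma(p+1)},\ldots,X_{\sigma(p+q)},fz)=fD_2(\cdots,z)+\sigma_{D_2}(\cdots)(f)z$, applying the Leibniz rule of $D_1$ to both summands produces, besides $fD_1(\cdots,D_2(\cdots,z))$ and $\sigma_{D_1}(\cdots)\big(\sigma_{D_2}(\cdots)(f)\big)z$, the two cross terms $\sigma_{D_1}(\cdots)(f)\,D_2(\cdots,z)$ and $\sigma_{D_2}(\cdots)(f)\,D_1(\cdots,z)$. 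Neither is of the form $f\cdot(\mbox{multiderivation})+(\mbox{vector field})(f)\,z$, so $D_1\circ D_2$ admits no symbol --- the same phenomenon as the composite of two derivations of an algebra failing to be a derivation. These cross terms cancel only in the commutator $(-1)^{pq}D_1\circ D_2-D_2\circ D_1$, where each cross term of $D_1\circ D_2$ is matched by the corresponding one of $D_2\circ D_1$ (the identities $(-1)^{pq}A_2-D_1=0$ and $(-1)^{pq}B_1-C_2=0$ in the paper's notation), and only then can one read off the symbol $\sigma_{[D_1,D_2]}=(-1)^{pq}\sigma_{D_1}\odot D_2-\sigma_{D_2}\odot D_1+\{\sigma_{D_1},\sigma_{D_2}\}$. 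So closure under the bracket is not a routine preliminary but the substance of the proposition; as written, your outline would not establish it. To repair it, note that the terms $\circ_k$ with $0\le k\le p-1$, in which $D_2$ is inserted into a $\D$-slot of $D_1$ and never touches $z$, are individually well behaved and contribute the piece $\sigma_{D_1}\odot D_2$, while the $k=p$ terms carry the obstruction and require the commutator.

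Conversely, what you treat as the heart of the matter --- graded skew-symmetry and the graded Jacobi identity --- is exactly what the paper does \emph{not} reprove: these are purely algebraic identities for the circle product \eqref{Circle Product} on multilinear maps, independent of the derivation property, and the paper imports them from Lemma $3$ of \cite{Rotk05} (the cochain bracket for $n$-Lie algebras). Your pre-Lie/associator strategy \`a la Gerstenhaber is the standard way to prove that lemma and is legitimate if you want a self-contained argument, and your proposed consistency checks against \cite{Rotk05} over a point and against \cite{CM08} for $n=2$ are sensible; but they do not substitute for the missing cancellation argument above.
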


\begin{proof}
It follows from Lemma $3$, \cite{Rotk05} that the bracket defined by equation \eqref{Bracket} is a graded Lie-bracket of degree $0$. 

Next, we show that for $D_1\in \mbox{Der}^p(A)$ and $D_2\in \mbox{Der}^q(A)$, the bracket $[D_1,D_2]\in \mbox{Der}^{p+q}(A)$. Let $f\in C^{\infty}(M)$, $X_1,\ldots,X_{p+q}\in \D$, and $z\in \Gamma A$. Then by the definition of $D_1 \circ_p D_2$, we obtain the following expression

\begin{align}\label{P1}
&(D_1 \circ_p D_2)( X_1, \ldots, X_{p+q},fz)\\ \nonumber
&=(-1)^{pq} \sum_{\sigma \in Sh(p,q)}sgn(\sigma) D_1 \circ_p^\sigma D_2(X_1, \ldots, X_{p+q}, fz)\\\nonumber
&=(-1)^{pq}  \sum_{\sigma \in Sh(p,q)} sgn(\sigma) D_1\big(X_{\sigma(1)}, \ldots, X_{\sigma(p)}, D_2(X_{\sigma(p+1)}, \ldots, X_{\sigma(p+q)}, fz)\big)\\\nonumber
&= (-1)^{pq}\sum_{\sigma \in Sh(p,q)} sgn(\sigma)\Big( D_1\big(X_{\sigma(1)}, \ldots, X_{\sigma(p)}, fD_2(X_{\sigma(p+1)}, \ldots, X_{\sigma(p+q)}, z)\big)\\\nonumber
&\quad\quad\quad\quad\quad +D_1\big(X_{\sigma(1)}, \ldots, X_{\sigma(p)}, \sigma_{D_2}(X_{\sigma(p+1)}, \ldots, X_{\sigma(p+q)})(f)z\big)\Big)\\\nonumber
&=A_1+A_2+B_1+B_2,
\end{align}
 \noindent where 
\begin{align*}
A_1 &  = (-1)^{pq}\sum_{\sigma \in Sh(p,q)} sgn(\sigma)~
f D_1\big(X_{\sigma(1)}, \ldots, X_{\sigma(p)}, D_2\big(X_{\sigma(p+1)}, \ldots, X_{\sigma(p+q)}, z\big)\big)\\\nonumber
A_2&=(-1)^{pq}\sum_{\sigma \in Sh(p,q)} sgn(\sigma)
~\sigma_{D_1}(X_{\sigma(1)}, \ldots, X_{\sigma(p)})(f)D_2 \big(X_{\sigma(p+1)}, \ldots, X_{\sigma(p+q)}, z\big) \\\nonumber
B_1&=(-1)^{pq}\sum_{\sigma \in Sh(p,q)} sgn(\sigma)~
\sigma_{D_2}(X_{\sigma(p+1)}, \ldots, X_{\sigma(p+q)})(f)D_1\big(X_{\sigma(1)}, \ldots, X_{\sigma(p)}, z\big)\\\nonumber
B_2&=(-1)^{pq}\sum_{\sigma \in Sh(p,q)} sgn(\sigma)~ \sigma_{D_1}\big(X_{\sigma(1)}, \ldots, X_{\sigma(p)}\big)\sigma_{D_2}\big((X_{\sigma(p+1)}, \ldots, X_{\sigma(p+q)}\big)(f)z
\end{align*}
Similarly, 
\begin{equation}\label{P2}
(D_2 \circ_q D_1)( X_1, \ldots, X_{p+q},fz)=C_1+C_2+D_1+D_2,
\end{equation}
where the expressions $C_1,~C_2,~D_1$ and $D_2$ are given as follows:
\begin{align*}
C_1&= (-1)^{pq}\sum_{\sigma \in Sh(q,p)} sgn(\sigma)
~fD_2\big(X_{\sigma(1)}, \ldots, X_{\sigma(q)}, D_1(X_{\sigma(q+1)}, \ldots, X_{\sigma(p+q)}, z)\big)\\\nonumber
C_2&=(-1)^{pq}\sum_{\sigma \in Sh(q,p)} sgn(\sigma)~\sigma_{D_2}(X_{\sigma(1)}, \ldots, X_{\sigma(q)})(f).D_1 \big(X_{\sigma(q+1)}, \ldots, X_{\sigma(p+q)}, z\big)\\\nonumber
D_1&=(-1)^{pq}\sum_{\sigma \in Sh(q,p)} sgn(\sigma)~
\sigma_{D_1}(X_{\sigma(q+1)}, \ldots, X_{\sigma(p+q)})(f).D_2\big(X_{\sigma(1)}, \ldots, X_{\sigma(q)}, z\big)\\\nonumber
D_2&=(-1)^{pq}\sum_{\sigma \in Sh(q,p)} sgn(\sigma)~ \sigma_{D_2}\big(X_{\sigma(1)}, \ldots, X_{\sigma(q)}\big)\sigma_{D_1}\big(X_{\sigma(q+1)}, \ldots, X_{\sigma(p+q)}\big)(f)z.\\\nonumber
\end{align*}
Let us observe that the following expressions hold.
\begin{enumerate}
\item $(-1)^{pq}A_2-D_1=0$, and 
\item $(-1)^{pq}B_1-C_2=0.$
\end{enumerate} 
Next, using the properties of multiderivations we obtain the following expressions:
\begin{align}\label{P3}
&\sum_{k=0}^{p-1}(D_1 \circ_k D_2)( X_1, \ldots, X_{p+q},fz)\\\nonumber
&=\sum_{k=0}^{p-1} \sum_{\sigma \in Sh(k,q)}sgn(\sigma) (-1)^{kq}\sum_{s=1}^{n-1} fD_1\Big(X_{\sigma(1)}, \ldots, X_{\sigma(k)},X_{k+q+1}^1\wedge \cdots\wedge X_{k+q+1}^{s-1}\wedge\\\nonumber
&\quad D_2(X_{\sigma(k+1)}, \ldots, X_{\sigma(k+q)}, X_{k+q+1}^s) 
\wedge X_{k+q+1}^{s+1}\wedge \cdots\wedge X_{k+q+1}^{n-1}, X_{k+q+2},\ldots, X_{p+q}, z\Big)\\\nonumber
&+\sum_{k=0}^{p-1} \sum_{\sigma \in Sh(k,q)}sgn(\sigma) (-1)^{kq}\sum_{s=1}^{n-1} \sigma_{D_1}\Big(X_{\sigma(1)}, \ldots, X_{\sigma(k)},X_{k+q+1}^1\wedge \cdots\wedge X_{k+q+1}^{s-1}\wedge\\\nonumber
&\quad D_2(X_{\sigma(k+1)}, \ldots, X_{\sigma(k+q)}, X_{k+q+1}^s) 
\wedge X_{k+q+1}^{s+1}\wedge \cdots\wedge X_{k+q+1}^{n-1}, X_{k+q+2},\ldots, X_{p+q}\Big)(f)z,
\end{align}
\newpage
and
\begin{align}\label{P4}
&\sum_{k=0}^{q-1}(D_2 \circ_k D_1)( X_1, \ldots, X_{p+q},fz)\\ \nonumber
&=\sum_{k=0}^{q-1} \sum_{\sigma \in Sh(k,q)}sgn(\sigma) (-1)^{kp}\sum_{s=1}^{n-1} fD_2\Big(X_{\sigma(1)}, \ldots, X_{\sigma(k)},X_{k+q+1}^1\wedge \cdots\wedge X_{k+p+1}^{s-1}\wedge\\\nonumber
&\quad D_1(X_{\sigma(k+1)}, \ldots, X_{\sigma(k+p)}, X_{k+p+1}^s) 
\wedge X_{k+p+1}^{s+1}\wedge \cdots\wedge X_{k+p+1}^{n-1}, X_{k+p+2},\ldots, X_{p+q}, z\Big)\\\nonumber
&+\sum_{k=0}^{q-1} \sum_{\sigma \in Sh(k,q)}sgn(\sigma) (-1)^{kp}\sum_{s=1}^{n-1} \sigma_{D_2}\Big(X_{\sigma(1)}, \ldots, X_{\sigma(k)},X_{k+p+1}^1\wedge \cdots\wedge X_{k+p+1}^{s-1}\wedge\\\nonumber
&\quad D_1(X_{\sigma(k+1)}, \ldots, X_{\sigma(k+p)}, X_{k+p+1}^s) 
\wedge X_{k+p+1}^{s+1}\wedge \cdots\wedge X_{k+p+1}^{n-1}, X_{k+p+2},\ldots, X_{p+q}\Big)(f)z.
\end{align}
Consequently, by using equations \eqref{P1}-\eqref{P4}, we get the following identity for the graded Lie bracket $[~,~]$:
\begin{align*}
[D_1,D_2](X_1,\ldots,X_{p+q},fz)=f[D_1,D_2](X_1,\ldots,X_{p+q},z)
+\sigma_{[D_1,D_2]}(X_1,\ldots,X_{p+q})(f)z,
\end{align*}
where the map $\sigma_{[D_1, D_2]}:\otimes^{p+q}\D\rightarrow \chi(M)$
is given by the following equation
\begin{equation}\label{Symbol of Bracket}
\sigma_{[D_1, D_2]} = (-1)^{pq} \sigma_{D_1} \odot D_2 - \sigma_{D_2} \odot D_1 + \{\sigma_{D_1}, \sigma_{D_2}\}.
\end{equation}
Here, the notation $\{\sigma_{D_1}, \sigma_{D_2}\}$ is defined by the following expression 
\begin{align}
\nonumber
\{\sigma_{D_1}, \sigma_{D_2}\}(X_1, \ldots, X_{p+q})
 = \sum_{\sigma \in Sh(p,q)}& sgn(\sigma) \Big(\sigma_{D_1}\big(X_{\sigma(1)}, \ldots, X_{\sigma(p)}\big)\sigma_{D_2}\big(X_{\sigma(p+1)}, \ldots, X_{\sigma(p+q)}\big)\\\nonumber
&-\sigma_{D_2}\big(X_{\sigma(p+1)}, \ldots, X_{\sigma(p+q)}\big)\sigma_{D_1}\big(X_{\sigma(1)}, \ldots, X_{\sigma(p)}\big)\Big)
\end{align}
 and the notation $\sigma_{D_1}\odot D_2$ is defined by the following expression
\begin{align*}
&(\sigma_{D_1} \odot D_2)(X_1, \ldots, X_{p+q})\\ 
&=\sum_{k=0}^{p-1} \sum_{\sigma \in Sh(k,q)}sgn(\sigma) (-1)^{kq}\sum_{s=1}^{n-1} \sigma_{D_1}\Big(X_{\sigma(1)}, \ldots, X_{\sigma(k)},X_{k+q+1}^1\wedge \cdots\wedge X_{k+q+1}^{s-1}\wedge\\\nonumber
&\quad D_2(X_{\sigma(k+1)}, \ldots, X_{\sigma(k+q)}, X_{k+q+1}^s) 
\wedge X_{k+q+1}^{s+1}\wedge \cdots\wedge X_{k+q+1}^{n-1}, X_{(k+q+2)},\ldots, X_{(p+q)}\Big)
\end{align*}
for all $X_1,\ldots,X_{p+q}\in \D$. Therefore, for any $D_1\in \mbox{Der}^p(A)$ and $D_2\in \mbox{Der}^q(A)$, the bracket $[D_1,D_2]\in \mbox{Der}^{p+q}(A)$ with symbol $\sigma_{[D_1,D_2]}$. Hence, the space of multiderivation of the vector bundle $A$ is closed under the graded Lie bracket given by the equation \eqref{Bracket}.

\end{proof}

The following Lemma shows that Filippov algebroids are canonical structures for the graded Lie bracket given by equation \eqref{Bracket}. 

\begin{proposition}\label{canonical structure}
Let $A$ be a vector bundle over $M$. Then there is a one-to-one correspondence between elements $\varphi\in \mbox{Der}^1(A)$ satisfying $[\varphi, \varphi]=0$ and Filippov algebroid structures on $A$.
\end{proposition}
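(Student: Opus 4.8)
The plan is to make the correspondence explicit in both directions and then show that the single equation $[\varphi,\varphi]=0$ encodes exactly the remaining defining conditions of a Filippov algebroid. First I would unravel the degree-$1$ data. By definition a $\varphi\in\mbox{Der}^1(A)$ is a map $\varphi\colon\D\wedge\Gamma A\to\Gamma A$, that is a map $\wedge^{n-1}\Gamma A\wedge\Gamma A\cong\wedge^{n}\Gamma A\to\Gamma A$, so it is the same datum as a fully skew-symmetric $n$-ary bracket $[x_1,\ldots,x_n]:=\varphi(x_1\wedge\cdots\wedge x_{n-1},x_n)$. Its symbol $\sigma_\varphi\colon\D\to\chi(M)$ is $C^\infty(M)$-multilinear, hence is precisely a bundle map $a:=\sigma_\varphi\colon\wedge^{n-1}A\to TM$, and the Leibniz identity in the definition of a multiderivation is verbatim condition (b) in the definition of a Filippov algebroid. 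This gives the two assignments $\varphi\mapsto(\,[\,\cdot\,],\sigma_\varphi\,)$ and $(\,[\,\cdot\,],a\,)\mapsto\varphi$, which are manifestly inverse to each other; the content of the proposition is that $[\varphi,\varphi]=0$ matches condition (a) together with the fundamental identity.

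Next I would compute $[\varphi,\varphi]$ using \eqref{Bracket}. Since $p=q=1$ we have $[\varphi,\varphi]=(-1)^{1}\varphi\circ\varphi-\varphi\circ\varphi=-2\,\varphi\circ\varphi$, so it suffices to evaluate $\varphi\circ\varphi\in\mbox{Der}^2(A)$ through \eqref{Circle Product}. There are two contributions. The $k=0$ term from \eqref{CPk}, evaluated on $(X_1,X_2,z)$ with $X_2=y_1\wedge\cdots\wedge y_{n-1}$, gives $\sum_{s=1}^{n-1}[y_1,\ldots,[x_1,\ldots,x_{n-1},y_s],\ldots,y_{n-1},z]$, while the $k=1$ term from \eqref{CP0}, summed over $Sh(1,1)$, gives $-[x_1,\ldots,x_{n-1},[y_1,\ldots,y_{n-1},z]]+[y_1,\ldots,y_{n-1},[x_1,\ldots,x_{n-1},z]]$. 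Writing $(a_1,\ldots,a_{n-1})=(x_1,\ldots,x_{n-1})$ for the fixed slot and $(b_1,\ldots,b_n)=(y_1,\ldots,y_{n-1},z)$ for the skew $n$-tuple, this reads
\[
\varphi\circ\varphi=\sum_{i=1}^{n}[b_1,\ldots,[a_1,\ldots,a_{n-1},b_i],\ldots,b_n]-[a_1,\ldots,a_{n-1},[b_1,\ldots,b_n]],
\]
so the vanishing of the $\Gamma A$-valued part of $[\varphi,\varphi]$ is literally the fundamental identity, and conversely the fundamental identity forces it to vanish. The skew-symmetry of the bracket makes the right-hand side well defined on $\D\otimes\wedge^{n}\Gamma A$, consistent with $[\varphi,\varphi]\in\mbox{Der}^2(A)$.

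It remains to recover condition (a) from the symbol. Applying \eqref{Symbol of Bracket} with $D_1=D_2=\varphi$ gives $\sigma_{[\varphi,\varphi]}=-2\,\sigma_\varphi\odot\varphi+\{\sigma_\varphi,\sigma_\varphi\}$, and a short computation shows $\{\sigma_\varphi,\sigma_\varphi\}(X_1,X_2)=2\,[a(X_1),a(X_2)]$ (twice the Lie bracket of vector fields) and $(\sigma_\varphi\odot\varphi)(X_1,X_2)=\sum_{s=1}^{n-1}a\big(y_1\wedge\cdots\wedge[x_1,\ldots,x_{n-1},y_s]\wedge\cdots\wedge y_{n-1}\big)$. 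Since the symbol of a multiderivation is uniquely determined by its underlying map, the vanishing of the map $[\varphi,\varphi]$ forces $\sigma_{[\varphi,\varphi]}=0$, which is exactly condition (a); conversely condition (a) is what makes this symbol vanish. Combining the three equivalences --- condition (b) from the multiderivation property, the fundamental identity from the $\Gamma A$-valued part, and condition (a) from the symbol --- shows that the two assignments restrict to mutually inverse bijections between $\{\varphi\in\mbox{Der}^1(A):[\varphi,\varphi]=0\}$ and the set of Filippov algebroid structures on $A$.

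I expect the only genuine obstacle to be the bookkeeping in the middle step: expanding \eqref{CPk} and \eqref{CP0} over $Sh(0,1)$ and $Sh(1,1)$, tracking the signs $(-1)^{kq}$, $\mathrm{sgn}(\sigma)$ and the outer $(-1)^{pq}$, and verifying that the insertion index $s$ reproduces exactly the $n$ summands of the Filippov identity with no spurious terms. Once the $k=0$ contribution is matched with the summation side and the $k=1$ contribution with the two outer terms, the rest is formal.
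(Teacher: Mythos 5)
Your proof is correct and follows essentially the same route as the paper's: identify the bracket with $\varphi$ and the anchor with $\sigma_\varphi$, expand $\varphi\circ\varphi$ into its $k=0$ and $k=1$ contributions, and observe that $[\varphi,\varphi]=-2\,\varphi\circ\varphi$ vanishes precisely when the fundamental identity holds. Your explicit verification of the anchor-compatibility condition (a) via $\sigma_{[\varphi,\varphi]}=-2\,\sigma_\varphi\odot\varphi+\{\sigma_\varphi,\sigma_\varphi\}$ and the observation that a vanishing multiderivation has vanishing symbol is a point the paper leaves implicit (it only notes later, in the proof of Theorem 4.3, that condition (a) follows from the fundamental identity and the Leibniz rule), so your write-up is if anything more complete.
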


\begin{proof}
First let us assume that $(A, [~,\ldots,~], a)$ be a Filippov algebroid. Let us define a map $\varphi:\D \wedge \Gamma A\longrightarrow \Gamma A$ by 
$$\varphi(X, y) := [X^1,\ldots,X^{n-1}, y] \quad\mbox{for}~~X\in \D,~ y \in L.$$ 
Then by definition of a Filippov algebroid it follows that $\varphi:= [~,\ldots,~]$ is a multiderivation with the symbol $\sigma_\varphi=a:\D\rightarrow \Gamma TM$. Thus, $\varphi\in \mbox{Der}^1(A)$. Next, from the definition of the product $\circ$, we obtain the following expression.
\begin{align}\label{canonical}
\nonumber
&\varphi\circ \varphi(X_1, X_2, z)\\\nonumber
&=\sum_{\sigma \in Sh(0, 1)} \mbox{sgn}(\sigma) \varphi \circ_0^\sigma \varphi(X_1, X_2, z) + (-1)\sum_{\sigma \in Sh(1, 1)} \mbox{sgn}(\sigma) \varphi \circ_1^\sigma \varphi(X_1, X_2, z)\\\nonumber
&=\sum_{s=1}^{n-1} \varphi(X_2^1, \ldots, X_2^{s-1}, \varphi(X_1, X_2^s), X_2^{s+1}, \ldots, X_2^{n-1},z) - \varphi(X_1, \varphi(X_2, z)) + \varphi(X_2, \varphi(X_1, z)) \\
&= \sum_{s=1}^{n-1} [X_2^1, \ldots, X_2^{s-1}, [X_1, X_2^s], X_2^{s+1}, \ldots, X_2^{n-1},z]- [X_1, [X_2, z]]+ [X_2,[X_1,z]].
\end{align}
Since the bracket $[~,\ldots,~]$ satisfies the fundamental identity, we get $\varphi\circ \varphi=0$, i.e. $$[\varphi,\varphi]=-2\varphi\circ \varphi=0.$$

Conversely, let $A$ be a vector bundle over $M$ and $\varphi\in Der^1(A)$ satisfying $[\varphi,\varphi]=0$. Then let us define a Filippov algebroid structure on $A$ as follows:
\begin{itemize}
\item the bracket is given by $[~,\ldots,~]:=D$. It follows by equation \eqref{canonical} that the bracket $[~,\ldots,~]$ satisfies the fundamental identity since $\varphi\circ \varphi=-\frac{1}{2}[\varphi,\varphi]=0$. 
\item the anchor map $a:=\sigma_\varphi$ is given by the symbol of the multiderivation $\varphi$.
\end{itemize}
\end{proof} 

Therefore, using the canonical structure of Filippov algebroids with respect to the graded Lie bracket given by the equation \eqref{Bracket}, we define a cochain complex $(C^*_F(A),\delta_F)$, where 
$$C^k_F(A)=Der^{k-1}(A)\quad \mbox{and }\quad \delta_F(\psi)=[\varphi,\psi] \mbox{ for any } \psi\in Der^{*}(A).$$
Here, $\varphi\in Der^1(A)$ satisfying $[\varphi,\varphi]=0$, is the corresponding $1$-multiderivation to the Filippov algebroid structure on the bundle $A$. We denote the associated cohomology of the cochain complex $(C^*_F(A),\delta_F)$ by $H^*_F(A)$.

\begin{example}
For $n=2$, the Filippov algebroid $A$ of order $n$ is simply a Lie algebroid. Moreover, the cochain complex $(C^*_F(A),\delta_F)$ and the cohomology $H^*_{F}(A)$ is the same as the deformation complex $ (C^*_{def}(A),\delta)$ and the cohomology $H^*_{def}(A)$ of the Lie algebroid $A$ (defined in \cite{CM08}).
\end{example}

\begin{example}

Let $(L,[~,\ldots,~])$ be an $n$-Lie algebra. Then $(L,[~,\ldots,~])$ is also a Filippov algebroid with trivial anchor. A Filippov multiderivation $\varphi$ of degree $p$ on $(L,[~,\ldots,~])$ is simply a linear map 
$$\varphi:\otimes^p(\wedge^{n-1}L)\wedge L\rightarrow L.$$
Therefore, 
$$Der^p(L)=C^p_{ad}$$
where, $C^p_{ad}$ is the space of $L$-valued $p$-cochains defined in Section $11.7$ of \cite{Survey}. Next, let us also recall from \cite{Survey} that the coboundary operator 
$$\delta:C^p_{ad}\rightarrow C^{p+1}_{ad}$$
for deformation cohomology of $n$-Lie algebra is given as follows.
 \begin{align*}
&\delta\varphi(X_1, \ldots, X_{p+1}, z)\\
=&\sum_{i=1}^{p+1}(-1)^{i} \psi(X_1, \ldots, \hat{X_i}, \ldots, X_p, [X_i, z])\\
&+ \sum_{1\leq i< j}^{p+1} (-1)^{i}\psi(X_1, \ldots, \hat{X_i}, \ldots, X_{j-1}, [X_i, X_j], X_{j+1}, \ldots, X_{p+1}, z)\quad \quad\quad \quad \\
&+ \sum_{i=1}^{p+1}(-1)^{i+1} [X_i, \psi(X_1, \ldots, \hat{X_i}, \ldots, X_{p+1}, z)]\\
&+(-1)^p \sum_{s=1}^{n-1}[X_{p+1}^1, \ldots, X_{p+1}^{s-1}, \psi(X_1, \ldots, X_p, X_{p+1}^s), X_{p+1}^{s+1}, \ldots, X_{p+1}^{n-1}, z]
\end{align*}
for all $X_1,\ldots,X_{p+1}\in \wedge^{n-1}L$ and $z\in L$. Here, 
$$[X_i, X_j]=\sum_{s=1}^{n-1}X_j^1\wedge X_j^2\wedge\cdots \wedge X_j^{s-1}\wedge[X_i,X_j^{s}]\wedge X_j^{s+1}\wedge\cdots\wedge X_j^{n-1}.$$
Now, let us consider the differential $\delta_F$ in the complex $(Der^*(L),\delta_F)$. Also, let $\varphi$ be the element in $\mbox{Der}^1(A)$ induced by the $n$-Lie bracket. Then for all $X_1,\ldots,X_{p+1}\in \wedge^{n-1}L$ and $z\in L$, from equation \eqref{Bracket}, we have the following expression.
\begin{align*}
&\delta_F \varphi(X_1, \ldots, X_{p+1}, z)\\
=&[\varphi, \varphi](X_1, \ldots, X_{p+1}, z)\\
= &((-1)^p \varphi\circ \psi-\psi \circ \varphi)(X_1, \ldots, X_{p+1}, z),
\end{align*}
i.e.,
\begin{align*}
&\delta_F \varphi(X_1, \ldots, X_{p+1}, z)\\
=&(-1)^p\Big\{\sum_{k=0}^1 (-1)^{kp}\sum_{\sigma \in Sh(k, p)} \mbox{sgn}(\sigma) \varphi \circ_k^\sigma \psi (X_1, \ldots, X_{p+1}, z)\Big\}\quad\quad\quad\quad\\
&-\Big\{ \sum_{k=0}^p (-1)^{k} \sum_{\sigma \in Sh(k,1)} \mbox{sgn}(\sigma) \psi \circ_k^\sigma \varphi(X_1, \ldots, X_{p+1}, z)\Big\}\quad\quad\quad\quad\\
=&\sum_{i=1}^{p+1}(-1)^{i-1} \psi(X_1, \ldots, \hat{X_i}, \ldots, X_{p+1}, [X_i, z])
\end{align*}
\begin{align*}
&+ \sum_{1\leq i< j}^{p+1} (-1)^{i+1}\psi(X_1, \ldots, \hat{X_i}, \ldots, X_{j-1}, [X_i, X_j], X_{j+1}, \ldots, X_{p+1}, z)\quad \quad\quad \quad \\
&+ \sum_{i=1}^{p+1}(-1)^{i+1} [X_i, \psi(X_1, \ldots, \hat{X_i}, \ldots, X_{p+1}, z)]\\
&(-1)^p \sum_{s=1}^{n-1}[X_{p+1}^1, \ldots, X_{p+1}^{s-1}, \psi(X_1, \ldots, X_p, X_{p+1}^s), X_{p+1}^{s+1}, \ldots, X_{p+1}^{n-1}, z]\\
=& \delta \psi (X_1, \ldots, X_{p+1}, z).
\end{align*}
Thus, the deformation complex $(C^*_{ad},\delta)$ of a Filippov algebra (see \cite{Survey}) coincides with the complex $(Der^*(L),\delta_F)$.

\end{example}

\section{Deformations of Filippov algebroids}
In this section, we discuss deformations of Filippov algebroids. We define Nijenhuis operators on Filippov algebroids and characterize trivial deformations in terms of these operators. 

Let $X=x_1\wedge\cdots\wedge x_{n-1}\in\D$; then the map $ad_X:\Gamma A\rightarrow \Gamma A$, defined by 
$$ad_X(y)=[x_1,\ldots, x_{n-1},y]\quad\mbox{for all } y\in \Gamma A,$$
is a derivation of the $n$-Lie bracket: 
$$[x_1, \ldots, x_{n-1},[y_1, \ldots, y_n]]= \sum_{i=1}^n [y_1, \ldots, y_{i-1}, [x_1, \ldots, x_{n-1}, y_i], y_{i+1},\ldots, y_n],$$
for all $y_i\in\Gamma A$. Moreover, since $A$ is a Filippov algebroid, the map $ad_X:\Gamma A\rightarrow \Gamma A$ for each $X\in \D$ is a derivation of Filippov algebroid. We call the set of derivations $\{ad_X: X\in \mathcal{L}(A)\}$ to be the space of inner derivation of the Filippov algebroid $A$.

Let us consider a $1$-cocycle $\psi\in Der(A)$, then $\delta_F(\psi)=[D,\psi]=0$, which implies that $\psi$ is a derivation of the Filippov algebroid $A$. Let $\psi$ be $1$-coboundary, i.e., $\psi=\delta_F(X)=ad_X$ for some $X\in \D$. Thus, $\psi$ is an inner derivation of the Filippov algebroid $A$. 
Therefore, 
$$H^1_F(A)=\frac{\mbox{Der}_F(A)}{\mbox{Space of inner derivations of } A}:=\mbox{ Space of Outer derivations of } A.$$ 

Let us recall the canonical structure (Proposition \ref{canonical structure}) of Filippov algebroids. We consider a Filippov algebroid structure $(A,[~,\ldots,~],a)$ on a vector bundle $A$ over $M$ as an element $\varphi\in Der^1(A)$ satisfying $[\varphi,\varphi]=0$.  
 
 \begin{definition}
Let a Filippov algebroid structure on a vector bundle $A$ over $M$ be given by an element $\varphi\in Der^1(A)$ satisfying $[\varphi,\varphi]=0$. Then a one-parameter deformation of this Filippov algebroid structure is a $t$-parametrized family of $\mathbb{R}$-bilinear map $\varphi_t:\D \wedge \Gamma (A)\rightarrow \Gamma (A),$ which is defined by $$\varphi_t(X,y)=\sum_{i=0}^{m} t^i \varphi_i(X,y) ~\mbox{for} ~~\varphi_0=\varphi,~ \varphi_i\in Der^1(A)~ \mbox{for}~ 1\leq i\leq m,~t\in \mathbb{R},$$ and satisfies 
$$[\varphi_t,\varphi_t]=0.$$
\end{definition}

Note that $\varphi_t(X,y)=\sum\limits _{i=0}^{m} t^i \varphi_i(X,y)$ is a 1-degree Filippov multiderivation of $A$ with the symbol $\sigma_{\varphi_t}:\D\rightarrow \chi(M)$, given by 
$$\sigma_{\varphi_t}(X)=\sum_{i=0}^m t^i \sigma_{\varphi_i}(X).$$
Moreover, since $\varphi_t$ satisfies $[\varphi_t,\varphi_t]=0$, it corresponds to a Filippov algebroid structure on the vector bundle $A$ over $M$. In particular, it yields a $t$-parametrized family of brackets $$[~,\ldots,~]_t:\Gamma A \times \stackrel{n}{\cdots} \times \Gamma A \rightarrow \Gamma A$$
 and a family of maps $$a_t: \D \rightarrow \chi(M)$$ which satisfy the following identities:
\begin{itemize}
\item $[x_1,\ldots,x_n]_t=[x_1,\ldots,x_n]+\sum\limits_{i=1}^{m}t^i\varphi_i(x_1,\ldots,x_n)$,
\item $a_t(x_1,\ldots,x_{n-1})=a(x_1,\ldots,x_{n-1})+\sum\limits_{i=1}^{m}t^i\sigma_{\varphi_i}(x_1,\ldots,x_{n-1})$
\end{itemize}
for all $x_1,\ldots,x_n\in \Gamma A$. The $t$-parametrized family $(A,[~,\ldots,~]_t,a_t)$ is called a $1$-parameter deformation of $(A,[~,\ldots,~],a)$ generated by $\varphi_1,\cdots, \varphi_{m}\in Der^1(A)$. 


\begin{theorem}\label{Infinitesimal}
The $t$-parametrized family $(A,[~,\ldots,~]_t,a_t)$ is a deformation of a Filippov algebroid $(A,[~,\ldots,~],a)$, generated by $\varphi_1,\ldots, \varphi_m\in Der^1(A)$ if and only if the following conditions hold.
\begin{enumerate}
\item $\delta_F(\varphi_1)=0$;
\item $\delta_F(\varphi_k)+\frac{1}{2}\sum\limits_{i=1}^{k-1}[\varphi_i,\varphi_{k-i}]=0$ for all $1< k\leq m$;
\item $\sum\limits_{i=k-m}^{m}[\varphi_i,\varphi_{k-i}]=0$ for all $m+1\leq k\leq 2m$.

\end{enumerate} 
\end{theorem}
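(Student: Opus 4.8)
The plan is to expand the deformation equation $[\varphi_t,\varphi_t]=0$ as a polynomial identity in $t$ and read off the coefficient of each power of $t$. Since the bracket on $\mbox{Der}^*(A)$ is bilinear, substituting $\varphi_t=\sum_{i=0}^m t^i\varphi_i$ gives
$$[\varphi_t,\varphi_t]=\sum_{i=0}^m\sum_{j=0}^m t^{i+j}[\varphi_i,\varphi_j]=\sum_{k=0}^{2m} t^k\Big(\sum_{\substack{i+j=k\\ 0\le i,j\le m}}[\varphi_i,\varphi_j]\Big).$$
Because $t$ ranges over all of $\mathbb{R}$, this polynomial in $t$ vanishes identically if and only if each coefficient $\sum_{i+j=k}[\varphi_i,\varphi_j]$ vanishes for $0\le k\le 2m$. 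So I would prove the theorem by matching these coefficients with conditions (1)--(3), which simultaneously handles both implications via this polynomial-identity principle.

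Before collecting terms I would record two facts. First, every $\varphi_i$ lies in $\mbox{Der}^1(A)$, and the graded antisymmetry $[D_1,D_2]=-(-1)^{pq}[D_2,D_1]$ coming from the graded Lie algebra structure established above specialises at $p=q=1$ to $[\varphi_i,\varphi_j]=[\varphi_j,\varphi_i]$; that is, the bracket is \emph{symmetric} on degree-one multiderivations. Second, by the definition of the differential one has $\delta_F(\psi)=[\varphi,\psi]$, so in particular $[\varphi_0,\varphi_k]=[\varphi,\varphi_k]=\delta_F(\varphi_k)$.

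Now I analyse the coefficient of $t^k$ case by case. For $k=0$ the only term is $[\varphi_0,\varphi_0]=[\varphi,\varphi]=0$, which holds automatically by the hypothesis that $\varphi$ defines a Filippov algebroid. For $k=1$ the coefficient is $[\varphi_0,\varphi_1]+[\varphi_1,\varphi_0]=2[\varphi,\varphi_1]=2\,\delta_F(\varphi_1)$ by symmetry, so its vanishing is exactly condition (1). For $2\le k\le m$ I would split $\sum_{i+j=k}[\varphi_i,\varphi_j]$ into the two boundary terms $i=0$ and $j=0$, which combine through symmetry into $2\,\delta_F(\varphi_k)$, and the interior terms $1\le i\le k-1$, which give $\sum_{i=1}^{k-1}[\varphi_i,\varphi_{k-i}]$; dividing by $2$ yields condition (2). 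Finally, for $m+1\le k\le 2m$ the constraints $0\le i,j\le m$ force $k-m\le i\le m$, so neither index can equal $0$ and no $\delta_F$ term survives; the coefficient is precisely $\sum_{i=k-m}^{m}[\varphi_i,\varphi_{k-i}]$, which is condition (3).

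I expect the argument to be essentially bookkeeping once the two preliminary facts are in place; the only point requiring genuine care is the symmetry $[\varphi_i,\varphi_j]=[\varphi_j,\varphi_i]$ on $\mbox{Der}^1(A)$, which is what produces the factors of $2$ and permits the boundary terms to be rewritten as $\delta_F(\varphi_k)$. The equivalence in both directions then follows at once, since conditions (1)--(3) are nothing but the assertion that every coefficient in the expansion of $[\varphi_t,\varphi_t]$ is zero.
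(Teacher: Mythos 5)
Your proof is correct and follows essentially the same route as the paper: both arguments amount to expanding $[\varphi_t,\varphi_t]$ as a polynomial in $t$ and equating each coefficient to zero, with the symmetry $[\varphi_i,\varphi_j]=[\varphi_j,\varphi_i]$ on $\mathrm{Der}^1(A)$ producing the factors of $2$ and the identification $[\varphi_0,\varphi_k]=\delta_F(\varphi_k)$. In fact your write-up is more self-contained, since the paper outsources exactly this coefficient comparison to Proposition 1 of the cited work on $n$-Lie algebras and only records separately that the Leibniz-type axioms for $[~,\ldots,~]_t$ and $a_t$ follow automatically from $\varphi_i\in \mathrm{Der}^1(A)$.
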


\begin{proof}
Since $\varphi_1,\ldots,\varphi_m\in Der^1(A)$, we observe that the bracket $[~,\ldots,~]_t$ and the map $a_t$ satisfy the following properties.
\begin{enumerate}
\item $[x_1, \ldots, x_{n-1}, fy]_t= f[x_1, \ldots, x_{n-1}, y]_t + a_t(x_1\wedge \cdots \wedge x_{n-1})(f)y$ for all $f\in C^{\infty}(M)$ and $x_1,\ldots,x_{n-1}\in \Gamma A$.
\item The map $a_t: \wedge ^{n-1}A \rightarrow TM$ is a vector bundle morphism. 
\end{enumerate}
Also, by applying the fundamental identity and the property (1) we can deduce that for all $x_1, \ldots, x_{n-1}, y_1, \ldots, y_{n-1} \in \Gamma A,$ 
$$[a(x_1 \wedge \cdots \wedge x_{n-1}), a(y_1 \wedge \cdots \wedge y_{n-1})]= \sum_{i=1}^{n-1} a(y_1\wedge \cdots \wedge[x_1, \ldots, x_{n-1}, y_i]\wedge y_{i+1} \wedge \cdots\wedge y_{n-1}),$$
where the bracket on the left hand side is the usual Lie bracket on vector fields. Next, it follows by a straightforward calculation that the bracket $[~,\ldots,~]_t$ satisfies the fundamental identity if and only if the conditions (1)-(3) hold (see Proposition $1$, \cite{LSBai} for details).
\end{proof}

Let us denote a deformation $(A,[~,\ldots,~]_t,a_t)$ of the Filippov algebroid $(A,[~,\ldots,~],a)$ simply by the notation $A_t$. Let us consider deformations $A_t$ and $A_t^{\prime}:=(A,[~,\ldots,~]_t^{\prime},a_t^{\prime})$ of $(A,[~,\ldots,~],a)$, generated by $\varphi_1,\varphi_2,\cdots,\varphi_{m_1}\in Der^1(A)$, and $\varphi^{\prime}_1,\varphi^{\prime}_2,\cdots,\varphi^{\prime}_{m_2}\in Der^1(A)$, respectively. 

\begin{definition}
The deformations $A_t$ and $A_t^{\prime}$ are said to be equivalent if there exists a bundle map $N:A\rightarrow A$ such that we get a family of Filippov algebroid homomorphisms $\Phi_t:=Id+tN:A_t\rightarrow A_t^{\prime}$, which satisfy
\begin{equation}\label{Trivial1}
\Phi_t([x_1,\ldots,x_n]_t)=[\Phi_t(x_1),\Phi_t(x_2),\ldots,\Phi_t(x_n)]_t^{\prime}
\end{equation}
\begin{equation}\label{Trivial2}
a_t^{\prime}\big(\Phi_t(x_1),\Phi_t(x_2),\ldots,\Phi_t(x_n)\big)=a_t(x_1,\ldots,x_{n-1})
\end{equation}
\end{definition}

On comparing the coefficients of $t$ in the equation \eqref{Trivial1}, we get
 \begin{align}\label{Exp:1}
(\varphi_1-\varphi_1^{\prime})(x_1,\ldots,x_n)&=\sum^n_{i=1}[x_1,\ldots,N(x_i),\ldots,x_n]-N([x_1,\ldots,x_n]).
\end{align}
Thus, we obtain the following proposition.

\begin{proposition}\label{Eq1}
The infinitesimals of two equivalent deformations of $A_t$ belong to the same cohomology class.
\end{proposition}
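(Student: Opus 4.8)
The plan is to show that the difference $\varphi_1-\varphi_1'$ of the two infinitesimals is a $2$-coboundary in the complex $(C^*_F(A),\delta_F)$, so that $\varphi_1$ and $\varphi_1'$ represent the same class in $H^2_F(A)$. By Theorem \ref{Infinitesimal}(1), each deformation forces its infinitesimal to satisfy $\delta_F(\varphi_1)=0$ (respectively $\delta_F(\varphi_1')=0$), so both are $2$-cocycles, recalling that $C^2_F(A)=\mbox{Der}^1(A)$. It then remains only to exhibit an element of $C^1_F(A)=\mbox{Der}^0(A)=\mbox{Der}(A)$ whose coboundary equals $\varphi_1-\varphi_1'$.

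The natural candidate is the bundle map $N$ itself. First I would observe that, since $N:A\rightarrow A$ is a vector bundle morphism over the identity, it is $C^\infty(M)$-linear on sections, i.e. $N(fX)=fN(X)$; hence $N$ is a derivation of the bundle $A$ with vanishing symbol, so $N\in \mbox{Der}^0(A)=C^1_F(A)$. It therefore makes sense to form $\delta_F(N)=[\varphi,N]$, where $\varphi\in\mbox{Der}^1(A)$ is the multiderivation encoding the Filippov algebroid structure $(A,[~,\ldots,~],a)$ via Proposition \ref{canonical structure}.

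Next I would compute $[\varphi,N]$ directly from the bracket \eqref{Bracket} and the circle products \eqref{CPk}, \eqref{CP0}. Since $\deg\varphi=1$ and $\deg N=0$, all the signs $(-1)^{pq}$ and $(-1)^{kq}$ collapse to $+1$ and the bracket reduces to $[\varphi,N]=\varphi\circ N-N\circ\varphi$. The term $N\circ\varphi$ contributes $-N([x_1,\ldots,x_n])$. In $\varphi\circ N$ only the values $k=0$ and $k=1$ occur: the $k=0$ part inserts $N$ into each of the $n-1$ slots of the $\mathcal{L}(A)$-argument, producing $\sum_{s=1}^{n-1}[x_1,\ldots,N(x_s),\ldots,x_n]$, while the $k=1$ part feeds $N$ into the final argument, producing $[x_1,\ldots,x_{n-1},N(x_n)]$. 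Collecting these yields
\begin{equation*}
\delta_F(N)(x_1,\ldots,x_n)=\sum_{i=1}^n[x_1,\ldots,N(x_i),\ldots,x_n]-N([x_1,\ldots,x_n]),
\end{equation*}
which is exactly the right-hand side of \eqref{Exp:1}. Hence $\varphi_1-\varphi_1'=\delta_F(N)$, and the two infinitesimals are cohomologous.

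The only delicate point is the bookkeeping inside the circle product: one must carefully separate the terms arising from inserting $N$ into an $\mathcal{L}(A)$-slot (the case $k=0$, summed over $s=1,\ldots,n-1$) from the term feeding $N$ into the last $\Gamma A$-argument (the case $k=1$), and confirm that the shuffle sums over $Sh(k,0)$ are trivial so that no spurious signs or multiplicities appear. Once this is checked, matching the resulting expression with \eqref{Exp:1} is immediate and completes the proof.
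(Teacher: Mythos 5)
Your proof is correct and follows essentially the same route as the paper: both reduce the claim to the identity $\varphi_1-\varphi_1'=\delta_F(N)$ obtained by comparing $t$-coefficients in \eqref{Trivial1}, i.e.\ to equation \eqref{Exp:1}. The only difference is that you explicitly verify, from the definitions \eqref{CPk}, \eqref{CP0} and \eqref{Bracket}, that $[\varphi,N]$ equals the right-hand side of \eqref{Exp:1} (and that the bundle map $N$ lies in $\mbox{Der}^0(A)$ with vanishing symbol), a step the paper leaves implicit.
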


\begin{proof}
Let the map $Id+tN:A\rightarrow A$ be an equivalence between the deformations $A_t$ and $A_t^{\prime}$, generated by $\varphi_1,\varphi_2,\cdots,\varphi_{m_1}\in Der^1(A)$ and $\varphi^{\prime}_1,\varphi^{\prime}_2,\cdots,\varphi^{\prime}_{m_2}\in Der^1(A)$ respectively. We observe that expression \eqref{Exp:1} implies 
$$\varphi_1-\varphi^{\prime}_1=\delta_F(N).$$
Thus, $\varphi_1$ and $\varphi^{\prime}_1$ represent the same cohomology class in $H^2_F (A)$.
\end{proof}

\begin{definition}
A deformation $A_t:=(A,[~,\ldots,~]_t,a_t)$ of Filippov algebroid $(A,[~,\ldots,~],a)$ is said to be trivial if there exist a bundle map $N:A\rightarrow A$ such that we get a family of Filippov algebroid homomorphisms 
$$\Phi_t=Id+tN: A_t\rightarrow (A,[~,\ldots,~],a).$$
\end{definition}

Let $A_t:=(A,[~,\ldots,~]_t,a_t)$ be a trivial deformation of Filippov algebroid $(A,[~,\ldots,~],a)$ generated by $\varphi_1,\varphi_2,\cdots,\varphi_{n-1}\in Der^1(A)$. In other words, $A_t$ is a trivial deformation if and only if the following equations hold
\begin{align}\label{Exp:2}
\varphi_1(x_1,\ldots,x_n)=\sum^n_{i=1}[x_1,\ldots,N(x_i),\ldots,x_n]-N([x_1,\ldots,x_n]),
\end{align}
for any $2\leq k\leq n-1$,
\begin{align}\label{Exp:4}
\nonumber
&\big(\varphi_k+N\varphi_{k-1}\big)(x_1,\ldots,x_n)\\
&=\sum_{1\leq i_1<i_2<\ldots<i_{k}\leq n}[x_1,\ldots,N(x_{i_1}),\ldots,N(x_{i_2}),\ldots,N(x_{i_{k}}),\ldots,x_n],
\end{align}
\begin{align}\label{Exp:3}
N(\varphi_{n-1}(x_1,\ldots,x_n))=[N(x_1),N(x_2)\ldots,N(x_n)],
\end{align}
for any $1\leq k\leq n-1$,
\begin{align}\label{Exp:6}
\nonumber
&\sigma_{\varphi_{k}}(x_1,\ldots,x_{n-1})\\
&= \sum_{1\leq i_1<i_2<\ldots<i_{k}\leq n-1}a(x_1,\ldots,N(x_{i_1}),\ldots,N(x_{i_2}),\ldots,N(x_{i_{k}}),\ldots,x_{n-1}).
\end{align}

\noindent We observe that from equations \eqref{Exp:2} and \eqref{Exp:4} the expression \eqref{Exp:6} hold. 

Let us define $n$-ary brackets $[~,\ldots,~]^k_N:\wedge^n A\rightarrow A$ by induction as follows: \\
for $k=1$,
\begin{equation}\label{1stbraket}
[x_1,x_2,\ldots,x_n]^1_N=\sum_{i=1}^n[x_1,\ldots,N(x_i),\ldots,x_n]-N[x_1,x_2,\ldots,x_n],
\end{equation}
for $2\leq k\leq n-1$, let us define
\begin{align}\label{kthbracket}
\nonumber
&[x_1,x_2,\ldots,x_n]^k_N\\
=&\sum_{1\leq i_1< i_2<\ldots< i_k\leq n}[x_1,\ldots,N(x_{i_1}),\ldots,N(x_{i_k}),\ldots,x_n]-N[x_1,x_2,\ldots,x_n]^{k-1}_N.
\end{align}
We call the vector bundle map $N:A\rightarrow A$ to be a ``Nijenhuis operator" on the Filippov algebroid $(A,[~,\ldots,~],a)$ if the following expression holds.
\begin{equation}\label{Nijenhuis condition}
[Nx_1,Nx_2,\ldots,Nx_n]=N\big([x_1,x_2,\ldots,x_n]^{n-1}_N\big)
\end{equation}
\begin{remark}
For $n=2$, the above definition coincides with the Nijenhuis operators on a Lie algebroid (see \cite{LeftSym}). Moreover, in the particular case when $A$ is a vector bundle over a point, the Filippov algebroid is simply an $n$-Lie algebra and the Nijenhuis operator coincides with the definition as in \cite{LSBai}.
\end{remark}

\begin{remark}
Let $A$ be a Filippov algebroid and $\rho$ be a representation of $A$ on a vector bundle $E$, then a bundle map $T:E\rightarrow A$ is an $\mathcal{O}$-operator if and only if 
$\tilde{T}$ is a Nijenhuis operator on the Filippov algebroid $A\rtimes_{\rho} E$, where 
$$\tilde{T}=\begin{bmatrix}
   0 & T \\
    0  & 0
\end{bmatrix}.$$  
\end{remark}

The following theorem characterizes the trivial deformations (of order $(n-1)$) of a Filippov algebroid (of order $n$) in terms of Nijenhuis operators. 

\begin{theorem}
A trivial deformation of a Filippov algebroid $(A,[~,\ldots,~],a)$ induces a Nijenhuis operator. Conversely, if $N:A\rightarrow A$ is a Nijenhuis operator on the Filippov algebroid $(A,[~,\ldots,~],a)$, then the cochains $\varphi_1,\ldots,\varphi_{n-1}\in Der^1(A)$ defined by
 $$\varphi_i(x_1,\ldots,x_n)=[x_1,\ldots,x_n]^i_N\quad\mbox{for } 1\leq i\leq n-1$$ 
generate a trivial deformation of the Filippov algebroid $(A,[~,\ldots,~],a)$.
\end{theorem}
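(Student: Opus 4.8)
The plan is to prove the two implications separately, in both cases exploiting the single structural observation that expanding the homomorphism condition \eqref{Trivial1} in powers of $t$ reproduces exactly the recursion \eqref{1stbraket}--\eqref{kthbracket} defining the brackets $[~,\ldots,~]^k_N$, with the Nijenhuis condition \eqref{Nijenhuis condition} emerging precisely as the top-order ($t^n$) term. Throughout I abbreviate
$$B_k(x_1,\ldots,x_n):=\sum_{1\leq i_1<\cdots<i_k\leq n}[x_1,\ldots,N(x_{i_1}),\ldots,N(x_{i_k}),\ldots,x_n],$$
so that $B_0=[x_1,\ldots,x_n]$ and $B_n=[N(x_1),\ldots,N(x_n)]$.

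For the forward implication I would start from a trivial deformation generated by $\varphi_1,\ldots,\varphi_{n-1}\in Der^1(A)$ with trivializing bundle map $N$, so $\Phi_t=Id+tN$ is a homomorphism onto $(A,[~,\ldots,~],a)$. Comparing coefficients of $t^k$ in \eqref{Trivial1} yields precisely equations \eqref{Exp:2}, \eqref{Exp:4}, and \eqref{Exp:3}. I then prove by induction on $k$ that $\varphi_k=[~,\ldots,~]^k_N$ for all $1\leq k\leq n-1$: the base case $k=1$ is the literal identity of \eqref{Exp:2} with \eqref{1stbraket}, and the inductive step rewrites \eqref{Exp:4} as $\varphi_k=B_k-N\varphi_{k-1}$, which coincides with \eqref{kthbracket} upon substituting the hypothesis $\varphi_{k-1}=[~,\ldots,~]^{k-1}_N$. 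Feeding $\varphi_{n-1}=[~,\ldots,~]^{n-1}_N$ into the top-order relation \eqref{Exp:3} gives exactly $N\big([x_1,\ldots,x_n]^{n-1}_N\big)=[N(x_1),\ldots,N(x_n)]$, which is \eqref{Nijenhuis condition}; hence $N$ is a Nijenhuis operator.

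For the converse, given a Nijenhuis operator $N$ I set $\varphi_i:=[~,\ldots,~]^i_N$ and form $[~,\ldots,~]_t:=[~,\ldots,~]+\sum_{i=1}^{n-1}t^i\varphi_i$ together with $a_t:=a+\sum_{i=1}^{n-1}t^i\sigma_{\varphi_i}$. First I check that each $\varphi_i\in Der^1(A)$ by an induction confirming the Leibniz rule of a degree-$1$ multiderivation and identifying its symbol with the $k$-fold insertion expression, so that \eqref{Exp:6}, and hence the anchor condition \eqref{Trivial2}, holds automatically. The heart of the argument is verifying \eqref{Trivial1}: multilinear expansion of $[x_1+tN(x_1),\ldots,x_n+tN(x_n)]$ gives $\sum_{k=0}^n t^k B_k$, while $\Phi_t([x_1,\ldots,x_n]_t)$ has $t^k$-coefficient $\varphi_k+N\varphi_{k-1}$ under the conventions $\varphi_{-1}=\varphi_n=0$, $\varphi_0=[~,\ldots,~]$. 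The recursion \eqref{1stbraket}--\eqref{kthbracket} forces $\varphi_k+N\varphi_{k-1}=B_k$ for $0\leq k\leq n-1$, while the single remaining relation, the $t^n$-term $N\varphi_{n-1}=B_n=[N(x_1),\ldots,N(x_n)]$, is exactly the Nijenhuis condition. Thus $\Phi_t$ is a homomorphism onto $(A,[~,\ldots,~],a)$.

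It remains to confirm that $[~,\ldots,~]_t$ is a genuine deformation, i.e. that $[\varphi_t,\varphi_t]=0$, equivalently that conditions (1)--(3) of Theorem \ref{Infinitesimal} hold. Here I expect the main obstacle: $\Phi_t=Id+tN$ need not be invertible for all $t$, so one cannot simply transport the fundamental identity from the target. The clean workaround is a polynomiality (density) argument: for $t$ in a neighborhood of $0$ the operator $Id+tN$ is invertible, and there the homomorphism property pulls back the fundamental identity of $(A,[~,\ldots,~],a)$ to $[~,\ldots,~]_t$; since $[\varphi_t,\varphi_t]$ is polynomial in $t$ and vanishes on an open interval, it vanishes identically. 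This gives $[\varphi_t,\varphi_t]=0$ for all $t$, and together with the Leibniz rule (and condition (a) deduced as in the proof of Theorem \ref{Infinitesimal}) shows the $\varphi_i$ generate a deformation, which is trivial by the very construction of $\Phi_t$.
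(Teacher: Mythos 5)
Your proposal is correct and, for most of its length, runs parallel to the paper's proof: the forward direction (coefficient comparison in \eqref{Trivial1} giving \eqref{Exp:2}--\eqref{Exp:3}, the induction identifying $\varphi_k$ with $[~,\ldots,~]^k_N$, and the top-order term yielding \eqref{Nijenhuis condition}) and the triviality computation in the converse (expanding $[\Phi_t(x_1),\ldots,\Phi_t(x_n)]$ into the $B_k$'s and the inductive symbol identity \eqref{ksymbols}) are exactly what the paper does. Where you genuinely diverge is the verification that the $\varphi_i$ actually generate a deformation, i.e.\ that $[\varphi_t,\varphi_t]=0$. The paper establishes conditions (1)--(3) of Theorem \ref{Infinitesimal} by a direct computation, which it delegates to the $n$-Lie algebra case (Theorem 1 of the Liu--Sheng--Zhou--Bai paper); you instead prove the intertwining identity first, invert $\Phi_t=Id+tN$ for small $t$, pull back the fundamental identity from the undeformed target, and conclude by polynomiality in $t$. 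Your route avoids the bracket computations entirely and makes the logical dependence transparent (the Nijenhuis condition is used once, as the $t^n$-coefficient), at the cost of an analytic step; the paper's route is purely algebraic and self-contained modulo the citation. One caveat you should patch: over a non-compact base $M$ the bundle map $Id+tN$ need not be invertible for \emph{any} nonzero $t$ uniformly (e.g.\ $N=f\cdot Id$ with $f$ unbounded), so "invertible for $t$ in a neighborhood of $0$" is false as stated. The fix is immediate and should be said explicitly: all terms of $[\varphi_t,\varphi_t]$ are local (multidifferential) operators, so it suffices to verify the vanishing at each point $p\in M$, where $(Id+tN)_p$ is invertible for $|t|<\|N_p\|^{-1}$ and invertibility persists on a neighborhood of $p$; polynomiality in $t$ of the value at $p$ then kills every coefficient pointwise. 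With that localization spelled out, your argument is complete and is a legitimate alternative to the paper's computation.
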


\begin{proof}
First, let us assume that $A_t$ is a trivial deformation of $(A,[~,\ldots,~],a)$. Then using equations \eqref{Exp:2}-\eqref{Exp:3} and using the notations in the previous discussion, it is clear that we obtain a vector bundle map $N:A\rightarrow A$ such that 
$$[Nx_1,Nx_2,\ldots,Nx_n]=N\big([x_1,x_2,\ldots,x_n]^{n-1}_N\big).$$

Conversely, let $N:A\rightarrow A$ be a Nijenhuis operator on $(A,[~,\ldots,~],a)$, i.e. equation \eqref{Nijenhuis condition} holds. We consider the cochains $\varphi_1,\ldots,\varphi_{n-1}\in Der^1(A)$ defined by
 $$\varphi_i(x_1,\ldots,x_n)=[x_1,\ldots,x_n]^i_N\quad\mbox{for } 1\leq i\leq n-1$$ 
By a straightforward calculation one can verify that 
$\varphi_i$ satisfies the following identities (for more details see the $n$-Lie algebra case discussed in Theorem $1$, \cite{LSBai}).
\begin{enumerate}
\item $\delta_F(\varphi_1)=0$;
\item $\delta_F(\varphi_k)+\frac{1}{2}\sum\limits_{i=1}^{k-1}[\varphi_i,\varphi_{k-i}]=0$ for all $1< k< n$;
\item $\sum\limits_{i=k-n+1}^{n-1}[\varphi_i,\varphi_{k-i}]=0$ for any $n\leq k\leq 2n-2$.
\end{enumerate} 
Therefore, by Theorem \ref{Infinitesimal} the cochains $\varphi_1,\ldots,\varphi_{n-1}\in Der^1(A)$ generate a deformation $A_t$ of order $(n-1)$ of the Filippov algebroid. 

Next, we show that this deformation is trivial. We consider the map $\Phi_t: A_t\rightarrow A$ given by $\Phi_t=Id+tN$. Then
\begin{align*}
&\Phi_t([x_1,\ldots,x_n]_t)\\
=&\Phi_t\big([x_1,\ldots,x_n]+\sum\limits_{i=1}^{n-1}t^i\varphi_i(x_1,\ldots,x_n)\big)\\
=&\Phi_t\Big([x_1,\ldots,x_n]+\sum\limits_{i=1}^{n-1} t^i[x_1,\ldots,x_n]^i_N)\Big)\\
=&[x_1,\ldots,x_n]+tN[x_1,\ldots,x_n]+\sum\limits_{i=1}^{n-1} t^i[x_1,\ldots,x_n]^i_N+\sum\limits_{i=1}^{n-1} t^{i+1}N([x_1,\ldots,x_n]^i_N)\\
\end{align*} 
and 
\begin{align*}
&[\Phi_t(x_1),\ldots,\Phi_t(x_n)]\\
=&[x_1,\ldots,x_n]+\sum_{k=1}^nt^k\sum_{1\leq i_1<i_2<\ldots<i_{k}\leq n}[x_1,\ldots,N(x_{i_1}),\ldots,N(x_{i_2}),\ldots,N(x_{i_{k}}),\ldots,x_n]
\\
\end{align*}
Thus, using the definition of the $n$-ary brackets $[~,\ldots,~]^i_N$ and the Nijenhuis condition \eqref{Nijenhuis condition}, we get the following. 
$$\Phi_t([x_1,\ldots,x_n]_t)=[\Phi_t(x_1),\Phi_t(x_2),\ldots,\Phi_t(x_n)].$$ 

Next, we show by induction that the symbol $\sigma_{\varphi_k}$ of $1$-cochain $\varphi_k=[~,\ldots,~]^k_N$ for all $1\leq k\leq n-1$ is given by 
\begin{align}\label{ksymbols}
\nonumber
&\sigma_{\varphi_k}(x_1,x_2,\ldots,x_{n-1})\\
=&\sum_{1\leq i_1< i_2<\ldots< i_k\leq n-1}a\big(x_1,\ldots,N(x_{i_1}),\ldots,N(x_{i_2}),\ldots\ldots,N(x_{i_k}),\ldots,x_{n-1}\big)
\end{align}
For $k=1$, from the equation \eqref{1stbraket}, we get 
\begin{align}\label{1stbraketII}
\nonumber
[x_1,x_2,\ldots,fx_n]^1_N=&\sum_{i=1}^{n-1}[x_1,\ldots,N(x_i),\ldots,fx_n]+[x_1,\ldots,x_i,\ldots,N(fx_n)]\\
&-N[x_1,x_2,\ldots,fx_n],
\end{align}
By equation \eqref{1stbraketII}, we obtain that the symbol $\sigma_{\varphi_1}$ of $\varphi_1$ is given by 
\begin{equation}\label{Sym1}
\sigma_{\varphi_1}(x_1,x_2,\ldots,x_{n-1})=\sum_{i=1}^{n-1}a(x_1,\ldots,N(x_i),\ldots,x_{n-1})
\end{equation}
i.e., equation \eqref{ksymbols} holds for $k=1$. Now, let us assume that for $k=j-1$, the equation \eqref{ksymbols} holds. Thus,
\begin{align}\label{j-1symbols}
\nonumber
&\sigma_{\varphi_{j-1}}(x_1,x_2,\ldots,x_{n-1})\\
=&\sum_{1\leq i_1< i_2<\ldots< i_{j-1}\leq n-1}a\big(x_1,\ldots,N(x_{i_1}),\ldots,N(x_{i_2}),\ldots\ldots,N(x_{i_{j-1}}),\ldots,x_{n-1}\big).
\end{align}
Then, we show that the equation \eqref{ksymbols} also holds for $k=j$.
We use the expression \eqref{kthbracket} for $k=j$ to obtain the following identity.
\begin{align}\label{kthbracketII}
\nonumber
&[x_1,x_2,\ldots,fx_n]^j_N\\\nonumber
=&\sum_{1\leq i_1< i_2<\ldots< i_j<n}[x_1,\ldots,N(x_{i_1}),\ldots,N(x_{i_2}),\ldots\ldots,N(x_{i_j}),\ldots,fx_n]\\ \nonumber
&+\sum_{1\leq i_1< i_2<\ldots< i_{j-1}<n}[x_1,\ldots,N(x_{i_1}),\ldots,N(x_{i_1}),\ldots\ldots,N(x_{i_{j-1}}),\ldots,N(fx_n)]\\
&-N[x_1,x_2,\ldots,x_n]^{j-1}_N.
\end{align}
From equations \eqref{j-1symbols}, \eqref{kthbracketII}, it follows that the equation \eqref{ksymbols} holds for $k=j$. Therefore, by induction equation \eqref{ksymbols} holds for any $1\leq k\leq n-1$. Next, by using identity \eqref{ksymbols} it immediately follows that 
$$a(\Phi_t(x_1),\Phi_t(x_2),\ldots,\Phi_t(x_{n-1}))=a_t(x_1,x_2,\ldots,x_{n-1}).$$
Hence, the deformation generated by $1$-cochains $\varphi_1,\varphi_2,\ldots,\varphi_{n-1}$ is a trivial deformation.
\end{proof}


\section{Finite order deformations}
In this section, we consider finite order deformations of Filippov algebroids in order to interpret the equivalence classes of infinitesimal deformations and rigid objects in terms of $H^2_F$. In the sequel, we discuss the problem of extending a finite order deformation to a deformation of the next higher order.  
\begin{definition}
Let  $(A,[~,\ldots,~],a)$ be a Filippov algebroid and it corresponds to $1$-multiderivation $\varphi\in Der^1(A)$ satisfying $[\varphi,\varphi]=0$. Then one-parameter deformation of order $k$ of $\varphi$ is given by a $t$-parametrized family 
$$\textstyle{\varphi_t=\varphi_0+\sum^k_{i=1}t^i\varphi_i }, \quad\mbox{ for }  \varphi_0=\varphi,~\varphi_i\in Der^1(A),\mbox{~and}~~ t\in\mathbb{R},$$ 
such that $$[\varphi_t,\varphi_t]=0 \quad\mbox{modulo}~~ t^{k+1}.$$
i.e.,
$$\sum\limits_{i+j=r; ~i,j\geq 0}[\varphi_i,\varphi_j]=0\quad\mbox{for}~~r=0,1,\ldots,k.$$
\end{definition}

Let $\textstyle{\varphi_t=\varphi_0+\sum^k_{i=1}t^i\varphi_i }$ be a deformation of order $k$. Then the 2-cochain $\varphi_1$ is called the infinitesimal of the deformation $\varphi_t$. More generally, if $\varphi_i=0$ for $1\leq i\leq (m-1)$ and $\varphi_m$ is a non-zero cochain for $m\leq k$, then $\varphi_m$ is called the $m$-infinitesimal of the deformation $\varphi_t$. On comparing the coefficients of 
$t$ in the expression
$$[\varphi_t,\varphi_t]=0 \quad\mbox{modulo}~~ t^{k+1},$$ 
we get the following proposition.

\begin{proposition}\label{infinitesimal}
The infinitesimal of the deformation $\varphi_t$ is a $2$-cocycle in $C^2_F(A)$. More generally, the $m$-infinitesimal is a $2$-cocycle. 
\end{proposition}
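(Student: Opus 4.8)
The plan is to extract the consequences of the deformation equation $[\varphi_t,\varphi_t]=0$ modulo $t^{k+1}$ by comparing coefficients of successive powers of $t$, and then to read off the cocycle condition from the lowest non-vanishing term. Recall that the coboundary operator on the complex $(C^*_F(A),\delta_F)$ is $\delta_F(\psi)=[\varphi,\psi]=[\varphi_0,\psi]$, and that each $\varphi_i\in Der^1(A)=C^2_F(A)$. Since the bracket of the preceding proposition is a graded Lie bracket of degree $0$, graded antisymmetry applied to two degree-one elements $\varphi_i,\varphi_j\in Der^1(A)$ gives
$$[\varphi_i,\varphi_j]=-(-1)^{1\cdot 1}[\varphi_j,\varphi_i]=[\varphi_j,\varphi_i].$$

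First I would treat the infinitesimal $\varphi_1$. Comparing the coefficients of $t^1$ in $[\varphi_t,\varphi_t]=0$ modulo $t^{k+1}$ yields
$$\sum_{i+j=1;~i,j\geq 0}[\varphi_i,\varphi_j]=[\varphi_0,\varphi_1]+[\varphi_1,\varphi_0]=0.$$
By the symmetry recorded above, $[\varphi_0,\varphi_1]=[\varphi_1,\varphi_0]$, so this reduces to $2[\varphi_0,\varphi_1]=0$, that is, $\delta_F(\varphi_1)=[\varphi_0,\varphi_1]=0$. Hence $\varphi_1$ is a $2$-cocycle in $C^2_F(A)$.

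For the general statement, suppose $\varphi_i=0$ for $1\leq i\leq m-1$ and $\varphi_m\neq 0$. I would compare the coefficients of $t^m$, obtaining
$$\sum_{i+j=m;~i,j\geq 0}[\varphi_i,\varphi_j]=0.$$
Every term with $1\leq i\leq m-1$ has complementary index $j=m-i$ lying in the same range $1\leq j\leq m-1$, so each such bracket vanishes because one of its arguments is $\varphi_i=0$. Only the two extreme terms survive, leaving $[\varphi_0,\varphi_m]+[\varphi_m,\varphi_0]=2[\varphi_0,\varphi_m]=0$, and therefore $\delta_F(\varphi_m)=0$, so $\varphi_m$ is a $2$-cocycle.

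I do not expect any serious obstacle: the entire argument is the extraction of coefficients together with the graded symmetry of the bracket on degree-one elements. The only point demanding care is verifying that in the $t^m$ equation all the intermediate brackets genuinely involve a vanishing $\varphi_i$, which is immediate from the index bookkeeping above.
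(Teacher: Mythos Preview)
Your proposal is correct and follows exactly the approach the paper takes: the paper does not give a separate proof but simply notes, in the sentence preceding the proposition, that it follows ``on comparing the coefficients of $t$ in the expression $[\varphi_t,\varphi_t]=0$ modulo $t^{k+1}$.'' Your write-up spells out this comparison (including the graded symmetry $[\varphi_i,\varphi_j]=[\varphi_j,\varphi_i]$ for degree-one elements and the vanishing of the intermediate terms when $\varphi_1=\cdots=\varphi_{m-1}=0$) more explicitly than the paper does, but the argument is the same.
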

\begin{definition}
Two deformations $\varphi_t$ and $\tilde{\varphi}_t$ of order $k$ are said to be equivalent if we have an automorphism $$\Phi_t : A \rightarrow A~~\mbox{ defined as}~~
\Phi_t=Id+\sum_{i=1}^k t^i\phi_i,$$
where $\phi_i:A \rightarrow A$ is a linear map, for $1\leq i \leq k$ such that 
\begin{equation}\label{equivalence condition}
\tilde{\varphi}_t(x_1,x_2,\ldots,x_n)=\Phi_t^{-1}\varphi_t(\Phi_t x_1, \Phi_t x_2,\ldots,\Phi_t x_n)\quad\mbox{modulo}~~ t^{k+1}.
\end{equation}
Here, the map $\Phi_t$ is invertible modulo $t^{k+1}$ and we denote the inverse by $\Phi_t^{-1}$. 
\end{definition}

On comparing the coefficients of $t$ from both sides of the equation \eqref{equivalence condition}, we get 
$$
\varphi_1-\tilde{\varphi}_1=[\varphi,\phi_1].
$$
Consequently, we obtain the following result.
\begin{theorem}\label{equivalent deformations}
The infinitesimals of equivalent finite order deformations of a Filippov algebroid $(A,[~,\ldots,~],a)$ belong to the same cohomology class in $H^2_F(A)$.
\end{theorem}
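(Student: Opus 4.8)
The plan is to establish the displayed relation $\varphi_1 - \tilde{\varphi}_1 = [\varphi, \phi_1]$ carefully and then observe that it immediately yields the cohomology statement. First I would expand both sides of the equivalence condition \eqref{equivalence condition} as power series in $t$ and collect the coefficient of $t^1$. On the left-hand side, $\tilde{\varphi}_t = \varphi + t\tilde{\varphi}_1 + O(t^2)$ contributes simply $\tilde{\varphi}_1$. On the right-hand side I must expand $\Phi_t^{-1}\varphi_t(\Phi_t x_1, \ldots, \Phi_t x_n)$, where $\Phi_t = \mathrm{Id} + t\phi_1 + O(t^2)$ and hence $\Phi_t^{-1} = \mathrm{Id} - t\phi_1 + O(t^2)$ modulo $t^{k+1}$. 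The degree-one term in $t$ then has three sources: the $t\varphi_1$ piece of $\varphi_t$ evaluated on the undeformed arguments, the outer factor $-t\phi_1$ applied to $\varphi(x_1,\ldots,x_n)$, and the sum over slots where one argument $x_j$ is replaced by $\phi_1(x_j)$ inside $\varphi$.

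The key step is to recognize that the combination of the latter two contributions is exactly the coboundary $\delta_F(\phi_1) = [\varphi, \phi_1]$. Regarding $\phi_1$ as an element of $\mathrm{Der}^0(A) = \mathrm{Der}(A)$ and $\varphi \in \mathrm{Der}^1(A)$, the bracket $[\varphi,\phi_1]$ unwinds via \eqref{Bracket} and the circle products \eqref{CPk}--\eqref{CP0} into precisely the expression $\sum_{j} \varphi(x_1,\ldots,\phi_1(x_j),\ldots,x_n) - \phi_1(\varphi(x_1,\ldots,x_n))$, which is the same form already seen in \eqref{Exp:1} for the case $N = \phi_1$. Thus collecting the $t^1$ coefficient gives
\begin{equation*}
\tilde{\varphi}_1 = \varphi_1 - [\varphi, \phi_1],
\end{equation*}
that is, $\varphi_1 - \tilde{\varphi}_1 = [\varphi,\phi_1] = \delta_F(\phi_1)$.

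To finish, I would invoke Proposition \ref{infinitesimal}, which guarantees that both $\varphi_1$ and $\tilde{\varphi}_1$ are $2$-cocycles in $C^2_F(A)$; since their difference is the coboundary $\delta_F(\phi_1)$ of the $1$-cochain $\phi_1 \in \mathrm{Der}^0(A) = C^1_F(A)$, they represent the same class in $H^2_F(A)$. The main obstacle I anticipate is purely bookkeeping: verifying that the sign conventions and the shuffle-sum structure in \eqref{Bracket}--\eqref{CP0} really reproduce the naive slot-wise expansion of the conjugation, so that the degree-one coefficient matches $[\varphi,\phi_1]$ rather than some sign-twisted variant. This is entirely analogous to the corresponding computation for equivalent deformations established earlier in \eqref{Exp:1}, so I would reduce to that case rather than recompute the bracket from scratch, and I would only need to check that the $m$-infinitesimal version of Proposition \ref{infinitesimal} applies so the cocycle condition is in hand.
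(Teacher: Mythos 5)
Your proposal is correct and follows essentially the same route as the paper: the paper likewise obtains $\varphi_1-\tilde{\varphi}_1=[\varphi,\phi_1]=\delta_F(\phi_1)$ by comparing the coefficients of $t$ in the equivalence condition \eqref{equivalence condition}, and concludes that the two infinitesimals are cohomologous $2$-cocycles. Your additional care in unwinding the bracket via \eqref{CPk}--\eqref{CP0} and in noting that $\phi_1$, being a bundle map, lies in $\mathrm{Der}^0(A)=C^1_F(A)$ only makes explicit what the paper leaves implicit.
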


A deformation of order $1$ of a Filippov algebroid is called an infinitesimal deformation. The next result interprets the equivalence classes of infinitesimal deformations of Filippov algebroids in terms of $H^2_F$. 
\begin{theorem}
Let $(A,[~,\ldots,~],a)$ be a Filippov algebroid. There is a bijective correspondence between the second cohomology space $H^2_F(A)$ and the equivalence classes of infinitesimal deformations of $(A,[~,\ldots,~],a)$.
\end{theorem}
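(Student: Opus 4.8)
The plan is to establish a bijection between $H^2_F(A)$ and equivalence classes of infinitesimal deformations by constructing maps in both directions and showing they are mutually inverse. An infinitesimal deformation is a first-order deformation $\varphi_t = \varphi_0 + t\varphi_1$ satisfying $[\varphi_t, \varphi_t] = 0$ modulo $t^2$. Expanding this condition and using $[\varphi_0, \varphi_0] = 0$, the only nontrivial constraint arising from the coefficient of $t$ is $2[\varphi_0, \varphi_1] = 0$, which by definition of $\delta_F$ is precisely $\delta_F(\varphi_1) = 0$. Thus by Proposition \ref{infinitesimal}, the infinitesimal $\varphi_1$ is a $2$-cocycle in $C^2_F(A)$, giving a well-defined cohomology class $[\varphi_1] \in H^2_F(A)$.

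The key steps I would carry out are as follows. First, I would define the forward map sending an infinitesimal deformation $\varphi_t = \varphi_0 + t\varphi_1$ to the cohomology class $[\varphi_1]$; the preceding discussion confirms $\varphi_1$ is a cocycle, so this is well defined on the level of deformations. Second, I would invoke Theorem \ref{equivalent deformations} directly: it states that infinitesimals of equivalent finite-order deformations lie in the same cohomology class, which shows the forward map descends to a well-defined map on equivalence classes of infinitesimal deformations. Third, for surjectivity, given any cocycle $\varphi_1 \in Der^1(A)$ with $\delta_F(\varphi_1) = 0$, I would check that $\varphi_t := \varphi_0 + t\varphi_1$ is genuinely an infinitesimal deformation; this amounts to verifying $[\varphi_t, \varphi_t] = 0$ modulo $t^2$, which reduces exactly to the cocycle condition $\delta_F(\varphi_1) = 0$ already in hand. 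Hence every cohomology class is realized.

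For injectivity, I would show that if two infinitesimal deformations $\varphi_t = \varphi_0 + t\varphi_1$ and $\tilde{\varphi}_t = \varphi_0 + t\tilde{\varphi}_1$ have infinitesimals representing the same class in $H^2_F(A)$, then the deformations are equivalent. Suppose $\varphi_1 - \tilde{\varphi}_1 = \delta_F(\phi_1) = [\varphi, \phi_1]$ for some linear map $\phi_1 : A \to A$. I would then construct the equivalence $\Phi_t = Id + t\phi_1$ and verify equation \eqref{equivalence condition} holds modulo $t^2$; comparing coefficients of $t$ recovers precisely the relation $\varphi_1 - \tilde{\varphi}_1 = [\varphi, \phi_1]$, as recorded in the computation preceding Theorem \ref{equivalent deformations}. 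This establishes that the forward map is injective on equivalence classes.

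The main obstacle I anticipate is verifying that the candidate coboundary element $\phi_1$ arising from a cohomological relation in $H^2_F(A)$ actually defines a legitimate equivalence $\Phi_t = Id + t\phi_1$ of deformations, rather than merely an algebraic identity of cochains. Specifically, one must confirm that $\Phi_t$ is a genuine bundle automorphism modulo $t^2$ compatible with both the bracket condition \eqref{equivalence condition} and the anchor, and that the first-order comparison $\varphi_1 - \tilde{\varphi}_1 = [\varphi, \phi_1]$ is the \emph{only} obstruction at this order. Since we work only modulo $t^2$, higher-order terms do not interfere, so this reduces to the linear-algebraic check already available; nonetheless care is needed to ensure the symbol compatibility (the anchor-level condition analogous to \eqref{Trivial2}) is automatically satisfied once the bracket-level equivalence holds, paralleling the symbol computation carried out in the proof characterizing trivial deformations.
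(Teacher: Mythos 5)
Your proposal is correct and follows essentially the same route as the paper's proof: both arguments rest on the same four facts (an infinitesimal is a $2$-cocycle, cohomologous cocycles give equivalent deformations via $\Phi_t = Id + t\phi_1$, equivalent deformations have cohomologous infinitesimals by Theorem \ref{equivalent deformations}, and any cocycle generates a deformation), with the only difference being that you build the correspondence in the direction deformations $\to$ cohomology while the paper builds the inverse map $[\varphi_1]\mapsto[\varphi_t]$. Your closing remark about anchor compatibility is a reasonable caution, but for the Section 5 notion of equivalence used here the condition is stated purely at the level of the multiderivations $\varphi_t$, so the symbol identity follows from \eqref{equivalence condition} by evaluating on $fz$.
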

\begin{proof}

Let $\varphi_1$ be a $2$-cocycle in $C^2_F(A)$. Then let us consider a map 
$$\varphi_t:=\varphi+t\varphi_1:A\rightarrow A.$$
Since $\delta_F(\varphi_1)=0$, it follows that $\varphi_t$ is an infinitesimal deformation of $\varphi$. Let $\tilde{\varphi}_1$ be a $2$-cocyle, cohomologous to $\varphi_1$ in $H^2_F(A)$. i.e., there exists $\phi_1\in C^1_F(A)$ such that ${\varphi}_1 - \tilde{\varphi}_1=\delta_F(\phi_1)$. Then, the infinitesimal deformation corresponding to the $2$-cocycle $\tilde{\varphi}_1$ is equivalent to the infinitesimal deformation $\varphi_t$. The equivalence between these two deformations is given by the map $\Phi_t:=Id+t\phi_1$. Therefore, we have a correspondence between $H^2_F(A)$ and the space of equivalence classes of infinitesimal deformations of $\varphi$, given by a map sending $[\varphi_1]\mapsto[\varphi_t]$.

Next, we show that the above mentioned correspondence is bijective. From Proposition \ref{infinitesimal}, it is clear that the correspondence is surjective. More precisely, for any equivalence class of infinitesimal deformations $[\varphi_t]$, we get a cohomology class $[\varphi_1]\in H^2_F(A)$. To show that the correspondence is injective, let us assume that the cohomology classes $[\varphi_1], [\tilde{\varphi}_1]\in H^2_F(A)$ correspond to the same equivalence class of infinitesimal deformations of $\varphi$. Let $\varphi_t:=\varphi+t{\varphi}_1$ and $ \tilde{\varphi}_t:=\varphi+t\tilde{\varphi}_1$ be the infinitesimal deformations corresponding to the $2$-cocycles $\varphi_1$ and $\tilde{\varphi}_1$, respectively. By assumption, infinitesimal deformations $\tilde{\varphi}_t$ and ${\varphi}_t$ are equivalent. Therefore, by Theorem \ref{equivalent deformations}, it follows that $ [\varphi_1]=[\tilde{\varphi}_1]$. Thus, the correspondence is bijective.
\end{proof}

Let us observe that the deformation $\varphi_0=\varphi$ can be considered as a deformation of order $k$, for $k\geq 1$. With this observation, we define rigid Filippov algebroids as follows:

\begin{definition}
A deformation of order $k$ of a Filippov algebroid is a trivial deformation if it is equivalent to the deformation $\varphi_{0}=\varphi$. We call a Filippov algebroid to be `rigid' if every finite order deformation is trivial.
\end{definition}

\begin{theorem}
Let $(A,[~,\ldots,~],a)$ be a Filippov algebroid. If $H^2_F(A)=0$, then the Filippov algebroid $(A,[~,\ldots,~],a)$ is rigid.
\end{theorem}
\begin{proof}
Let $\varphi_t$ be a deformation of order $k$ with $m$-infinitesimal $\varphi_m$, for $1\leq m\leq k$. 
From Proposition \ref{infinitesimal}, the $m$-infinitesimal $\varphi_m$ is a $2$-cocycle. Since $H^2_F(A)=0$, there exists 1-cochain $\Psi \in C^1_{F}(A)$ such that  $\delta_F(\Psi)=\varphi_m$. Let us define
$$
\Phi_t=Id+t^m \Psi  \quad\mbox{and}\quad\tilde{\varphi}_t(x_1,x_2,\ldots,x_n)=\Phi_t^{-1}\varphi_t(\Phi_t x_1, \Phi_t x_2,\ldots,\Phi_t x_n)~~ \mbox{modulo}~ t^{k+1}.
$$
It is clear that $\tilde{\varphi}_t$ is a deformation of order $k$ and it is equivalent to the deformation ${\varphi}_t$. Next, by comparing the coefficients of $t^m$, we get that
$$
\tilde{\varphi}_m-\varphi_m=-[\varphi,\Psi]=-\delta_F(\Psi),
$$
which implies that $\tilde{\varphi}_m=0$. Thus, ${\varphi}_t$ is equivalent to a deformation $\tilde{\varphi}_t:=\varphi+\sum\limits_{i=m+1}^k t^i\tilde{\varphi}_i$. We can repeat the argument to show that ${\varphi}_t$ is equivalent to $\varphi_0=\varphi$.
\end{proof}
\subsection{Obstructions}

Let $(A,[~,\ldots,~],a)$ be a Filippov algebroid and it corresponds to $1$-multiderivation $\varphi\in Der^1(A)$ satisfying $[\varphi,\varphi]=0$. We associate a $3$-cocycle in $H^3_F(A)$ (called obstruction cocycle) to any deformation of order $k$ of $\varphi$. We show that the deformation of order $k$ extends to a deformation of order $k+1$ if and only if the corresponding obstruction class vanishes.

\begin{definition}
Let $\textstyle{\varphi_t=\varphi_0+\sum^k_{i=1}t^i\varphi_i }$ be a deformation of order $k$ of Filippov algebroid $(A,[~,\ldots,~],a)$. Accordingly, we say that $\varphi_t$ extends to a deformation of order $k+1$ if there exists a 2-cochain $\varphi_{k+1}\in C^2_{F}(A)$ such that $$\tilde{\varphi}_t=\varphi_t+t^{k+1} \varphi_{k+1} $$ is a deformation of order $k+1$.
\end{definition}

If $\varphi_t$ is a deformation of order $k$, then on comparing the coefficients of $t^r$ in the expression $[\varphi_t,\varphi_t]=0$ modulo $t^{k+1}$, we get the following identities
$$\sum_{i+j=r;~~i,j\geq 0}[\varphi_i,\varphi_j]=0\quad \mbox{for ~}r=0,1,\ldots,k.$$

Thus, for an element $\varphi_{k+1}\in C^2_{F}(A)$, the map $\tilde{\varphi}_t:=\varphi_t+t^{k+1}\varphi_{k+1}$ is an extension of $\varphi_t$ if
$$\sum_{i+j=k+1;~~i,j\geq 0}[\varphi_i,\varphi_j]=0.$$

\begin{definition}
Let $\varphi_t$ be a deformation of $\varphi$ of order $k$. Let us consider the $3$-cochain $\Theta_F \in C^3_{F}(A)$ defined as follows
\begin{equation}\label{Obst}
\Theta_F =-1/2\sum_{i+j=k+1;~i,j>0}[\varphi_i,\varphi_j].
\end{equation}
The 3-cochain $\Theta_F$ is called the obstruction cochain for extending the deformation of $\varphi$ of order $k$ to a deformation of order $k+1$. 
By equation \eqref{Obst}, and using graded Jacobi identity of the bracket $[~,~]$ it follows that $\Theta_F$ is a 3-cocycle.
\end{definition}

\begin{theorem}\label{hom-Obst}
Let $\varphi_t$ be a deformation of $\varphi$ of order $k$. Then $\varphi_t$ extends to a deformation of order $k+1$ if and only if the cohomology class of $\Theta_F$ vanishes.

\begin{proof}
Suppose that a deformation $\varphi_t$ of order $k$ extends to a deformation of order $k+1$. Thus, there exists an element $\varphi_{k+1}\in C^2_{F}(A)$ such that 
$\tilde{\varphi_t}=\varphi_t+t^{k+1} \varphi_{k+1}$ is an extension of $\varphi_t$, i.e., 
$$\sum_{i+j=k+1;~~i,j\geq 0}[\varphi_i,\varphi_j]=0.$$
Consequently, we get $\Theta_F=\delta_F(\varphi_{k+1})$. Hence, the cohomology class of $\Theta_F$ vanishes.

Conversely, let us assume that $\Theta_F$ is a coboundary. Suppose that
$$
\Theta_F=\delta(\varphi_{k+1}) 
$$
for some $2$-cochain $\varphi_{k+1}$. Define a map $\tilde{\varphi_t}:\D\times \Gamma(A)\rightarrow \Gamma(A)$ as follows
$$
\tilde{\varphi_t}=\varphi_t+t^{k+1}\varphi_{k+1}.
$$
Then,
$$\Theta_F=-1/2\sum_{i+j=k+1;~i,j>0}[\varphi_i,\varphi_j]=\delta(\varphi_{k+1}).$$
Since, $\delta_F(\varphi_{k+1})=[\varphi,\varphi_{k+1}]$, we obtain the following expression
$$\sum_{i+j=k+1;~i,j\geq 0}[\varphi_i,\varphi_j]=0.$$ 
This, in turn, implies that $\tilde{\varphi_t}$ satisfies the identity: $[\tilde{\varphi_t},\tilde{\varphi_t}]=0$ modulo $t^{k+2}$. Therefore, the deformation $\varphi_t$ extends to a deformation $\tilde{\varphi_t}$ of order $k+1$.
\end{proof}

\end{theorem}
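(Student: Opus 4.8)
The plan is to reduce the entire statement to a single cohomological equation. The defining condition $[\varphi_t,\varphi_t]=0$ modulo $t^{k+2}$ for an order-$(k+1)$ deformation is, power of $t$ by power of $t$, a sequence of identities in the graded Lie algebra $\mbox{Der}^*(A)$, and I would observe that passing from order $k$ to order $k+1$ introduces exactly one new identity, namely the one at degree $t^{k+1}$. Since $\varphi_t$ is already a deformation of order $k$, the identities $\sum_{i+j=r;~i,j\geq 0}[\varphi_i,\varphi_j]=0$ hold for $r=0,1,\ldots,k$, and for any candidate $\tilde{\varphi}_t=\varphi_t+t^{k+1}\varphi_{k+1}$ these lower-order identities remain untouched because $\varphi_{k+1}$ first enters at order $t^{k+1}$. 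Hence $\tilde{\varphi}_t$ is an order-$(k+1)$ deformation if and only if the single equation $\sum_{i+j=k+1;~i,j\geq 0}[\varphi_i,\varphi_j]=0$ holds.

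Next I would split this sum into boundary terms (those with $i=0$ or $j=0$) and interior terms (those with $i,j>0$). Using $\varphi_0=\varphi$ together with the fact that the graded Lie bracket of \eqref{Bracket} is symmetric on the degree-one elements $\varphi_i\in\mbox{Der}^1(A)$ (for $p=q=1$ the graded antisymmetry $[D_1,D_2]=-(-1)^{pq}[D_2,D_1]$ gives $[\varphi_i,\varphi_j]=[\varphi_j,\varphi_i]$), the two boundary terms combine to $2[\varphi,\varphi_{k+1}]=2\delta_F(\varphi_{k+1})$, while the interior terms equal $-2\Theta_F$ by the definition \eqref{Obst}. The obstruction equation therefore becomes equivalent to
$$\delta_F(\varphi_{k+1})=\Theta_F.$$

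With this reduction both implications are immediate. For the forward direction, an extension of $\varphi_t$ supplies a $2$-cochain $\varphi_{k+1}\in C^2_F(A)$ satisfying $\delta_F(\varphi_{k+1})=\Theta_F$, so $\Theta_F$ is a coboundary and its class in $H^3_F(A)$ vanishes. For the converse, a vanishing cohomology class yields some $\varphi_{k+1}\in C^2_F(A)$ with $\delta_F(\varphi_{k+1})=\Theta_F$, and then $\tilde{\varphi}_t=\varphi_t+t^{k+1}\varphi_{k+1}$ satisfies the degree-$(k+1)$ identity while preserving all lower-order ones, so it is a genuine order-$(k+1)$ deformation extending $\varphi_t$.

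The step I expect to be the real content, rather than the bookkeeping above, is the well-posedness of the statement itself: one must know that $\Theta_F$ is a $3$-cocycle, so that asking for its class to vanish is meaningful and the coboundary $\delta_F(\varphi_{k+1})$ can equal it. I would establish $\delta_F(\Theta_F)=0$ by applying $\delta_F=[\varphi,-]$ to \eqref{Obst}, expanding with the graded Jacobi identity for $[~,~]$, and using the order-$k$ deformation identities $\sum_{i+j=r;~i,j\geq 0}[\varphi_i,\varphi_j]=0$ to cancel the resulting terms; the delicate point is keeping precise track of the $-1/2$ normalization and the signs from the convention in \eqref{Bracket}, since these are exactly what make the cocycle identity close.
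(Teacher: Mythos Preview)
Your proposal is correct and follows essentially the same approach as the paper: both arguments reduce the extension problem to the single identity $\sum_{i+j=k+1;~i,j\geq 0}[\varphi_i,\varphi_j]=0$, split off the $i=0$ and $j=0$ terms to recognize $2\delta_F(\varphi_{k+1})$, and identify the remaining interior sum as $-2\Theta_F$, so that the equation becomes $\delta_F(\varphi_{k+1})=\Theta_F$. You are somewhat more explicit than the paper about the symmetry $[\varphi_i,\varphi_j]=[\varphi_j,\varphi_i]$ on degree-one elements and about why the lower-order identities are unaffected, and you flag the cocycle verification for $\Theta_F$ (which the paper states separately in the definition preceding the theorem), but the underlying argument is the same.
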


\begin{corollary}
Let $(A,[~,\ldots,~],a)$ be a Filippov algebroid. If $H^3_{F}(A)=0$, then every finite order deformation of the Filippov algebroid extends to higher order deformations.
\end{corollary}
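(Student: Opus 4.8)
The plan is to deduce this directly from Theorem \ref{hom-Obst}, which characterizes extendability of a finite order deformation by the vanishing of the cohomology class of the obstruction cocycle. First I would fix an arbitrary finite order deformation $\varphi_t = \varphi_0 + \sum_{i=1}^k t^i \varphi_i$ of order $k$ of the Filippov algebroid $(A, [~,\ldots,~], a)$. Associated to this deformation is the obstruction cochain $\Theta_F \in C^3_F(A)$ given by equation \eqref{Obst}, which, as already noted, is a $3$-cocycle by virtue of the graded Jacobi identity for the bracket $[~,~]$.

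Next I would invoke the hypothesis $H^3_F(A) = 0$. Since the third cohomology vanishes, every $3$-cocycle in $C^3_F(A)$ is a coboundary; in particular, the cohomology class of $\Theta_F$ is zero. Therefore, by Theorem \ref{hom-Obst}, the deformation $\varphi_t$ extends to a deformation of order $k+1$.

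Finally, to reach \emph{higher order} deformations I would iterate this step: the extended deformation of order $k+1$ is again a finite order deformation, so its obstruction cochain is again a $3$-cocycle whose cohomology class vanishes because $H^3_F(A) = 0$, and hence it extends further to order $k+2$. Proceeding inductively, the original order $k$ deformation extends to a deformation of every order at least $k$.

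There is essentially no hard step here, since the entire analytic content has been packaged into Theorem \ref{hom-Obst} together with the fact that $\Theta_F$ is a cocycle. The only point that warrants a moment's care is the inductive bookkeeping: at each stage one must check that the newly produced extension is genuinely a deformation of the next order, so that the obstruction-theoretic argument can legitimately be reapplied. This, however, is immediate from the definition of an extension and from the shape of equation \eqref{Obst}, so I expect the proof to be short.
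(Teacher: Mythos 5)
Your proposal is correct and is exactly the argument the paper intends: the corollary is an immediate consequence of Theorem \ref{hom-Obst}, since $H^3_F(A)=0$ forces the obstruction class of any finite order deformation to vanish, and iterating gives extensions to all higher orders. The paper in fact states the corollary without proof, treating precisely this deduction as evident.
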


\noindent {\em Acknowledgements.} The research of S. K. Mishra is supported by the NBHM postdoctoral fellowship. The author thanks NBHM for its support.

\end{document}